\newtheorem{theorem}{Theorem}[section]
\newtheorem{corollary}[theorem]{Corollary}
\newtheorem{claim}[theorem]{Claim}
\newtheorem{conjecture}[theorem]{Conjecture}
\newtheorem{proposition}[theorem]{Proposition}
\newtheorem{lemma}[theorem]{Lemma}
\newtheorem{observation}[theorem]{Observation}
\DeclarePairedDelimiter\floor{\lfloor}{\rfloor}
\newcommand\restrict[1]{\raisebox{-.5ex}{$|$}_{#1}} 
\def\zz{\mathbb{Z}}
\newcommand{\aut}{{\rm Aut}}
\begin{document}

\title{The automorphism groups of some token graphs}

\author{Sof\'ia Ibarra 
\and Luis Manuel Rivera
}
\date{}

\maketitle{}

\begin{abstract}
 
The token graphs of graphs have been studied at least from the 80's with different names and by different authors. The Johnson graph $J(n, k)$ is isomorphic to the $k$-token graph of the complete graph $K_n$. To our knowledge, the unique results about the automorphism groups of token graphs are for the case of the Johnson graphs. In this paper we begin the study of the automorphism groups of token graphs of another graphs. In particular we obtain the automorphism group of the $k$-token graph of the path graph $P_n$, for $n\neq 2k$. Also, we obtain the automorphism group of the $2$-token graph of the following graphs: cycle, star, fan and wheel graphs. 
\end{abstract}

{\it Keywords:}  Token graphs;  automorphism groups, Johnson graphs.\\
{\it AMS Subject Classification Numbers:}    05C76, 05C60.

\section{Introduction}

Let $\Gamma$ be a simple graph of order $n$. Let $1\leq k \leq n-1$ be an integer. The {\it $k$-token graph}  $F_k(\Gamma)$ of $\Gamma$ is defined as the graph with vertex set all $k$-subsets of $V(\Gamma)$, where two vertices are adjacent in $F_k(\Gamma)$ whenever their symmetric difference is an edge of $\Gamma$. If $k \in \{1, n-1\}$, then $F_k(\Gamma)$ is isomorphic to $\Gamma$ and in this case we say that $F_k(\Gamma)$ is a {\it trivial token graph} of $\Gamma$. In fact, if $\Gamma$ is a graph of order $n$, then $F_k(\Gamma) \simeq F_{n-k}(\Gamma)$.  

The token graphs have been redefined several times and with different names. When $k=2$, this class of graphs are called {\it double vertex graphs} that were widely studied by Alavi et al.~\cite{alavi2,alavi1,alavi3, alavi4} and are the same that the  $2$-subgraph graphs defined in a thesis of G. Johns~\cite{johns}. In the work of Zhu et al. \cite{zhu}, the $k$-token graphs are named $n$-tuple vertex graphs. 
Later, T. Rudolph \cite{rudolph} redefined the double vertex graphs with the name of symmetric powers of graphs with the idea to study the graph isomorphism problem and  some problems in quantum mechanics.  There are several papers related with Rudolph's work, see, e. g., \cite{alzaga, aude, barghi, fisch} and the references therein. Some of them motivated by the connection between token graphs and the Heisenberg Hamiltonian (see, e. g.,  \cite{ouyang} and the references therein), that is related with the Heisenberg model \cite{heisen}, a quantum theory of ferromagnetism. 

Finally, R. Fabila-Monroy, et. al.~\cite{ruy}, in an independent way, reintroduce this concept but now with the name of token graphs and began a systematic study of  several combinatorial properties of this graphs: connectivity, diameter, cliques, chromatic number and Hamiltonian paths. In the last years, several groups of authors have continued with this line of research (see, e.g., \cite{alba, deepa, deepa2, gomez, paloma, leatrujillo, rive-tru}). For example, Carballosa et al.~\cite{carba} studied the planarity and regularity of token graphs and  Lea\~nos and Trujillo-Negrete \cite{leatrujillo} proved a conjecture of Fabila-Monroy, et. al~\cite{ruy} about the connectivity of token graphs. Finally, G\'omez Soto et al. \cite{gomez} found the packing number of the $2$-token graph of the path graph, that is equal to the size of largest binary code of length $n$ and constant weight $2$ that can correct a single adjacent transposition (sequence A085680 in~\cite{oeis}).

 When $\Gamma$ is the complete graph $K_n$, the $k$-token graph $F_k(\Gamma)$ is isomorphic to the Johnson graph $J(n, k)$ \cite{Jo}. To the knowledge of the authors, the only results about the automorphism groups of token graphs are about Johnson graphs. It is known that if $n \neq 2k$, then $\aut(F_k(K_n))\simeq S_n$ and if $n = 2k$, then $\aut(F_k(K_n))\simeq S_2 \times S_n$, where $S_n$ denotes the symmetric group on $n$ symbols  (see., e.g. \cite{jones, miraf, ramras}). 

In this work, we study the automorphism group of other token graphs. Our main results can be stated as two theorems:
\begin{theorem}\label{elte2}
Let $n \neq 4$ be an integer greater than $2$. If $\Gamma \in \{C_n, K_{1, n-1}, A_{1, n-1}, W_{1, n-1}\}$, 
then \[
\aut(F_2(\Gamma))=\aut(\Gamma),
\] 
where $C_n, K_{1, n-1}, A_{1, n-1}$ and $W_{1, n-1}$, denotes the cycle, star, fan and wheel graphs, respectively. 
\end{theorem}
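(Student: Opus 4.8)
The plan is to prove each of the four equalities by the same two-step template, separating the easy and hard inclusions. The inclusion $\aut(\Gamma)\subseteq\aut(F_2(\Gamma))$ is general and painless, and I would invoke it from the preliminaries: every $\sigma\in\aut(\Gamma)$ induces a map $\sigma^{\ast}$ on $2$-subsets by $\sigma^{\ast}(\{a,b\})=\{\sigma(a),\sigma(b)\}$, which preserves symmetric differences and hence adjacency in $F_2(\Gamma)$, and $\sigma\mapsto\sigma^{\ast}$ is an injective homomorphism once $|V(\Gamma)|\ge 3$. So the entire content is the reverse inclusion: every $\phi\in\aut(F_2(\Gamma))$ is of the form $\sigma^{\ast}$ for some $\sigma\in\aut(\Gamma)$.

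For the reverse inclusion I would isolate a single structural claim and show that everything else is formal. For $v\in V(\Gamma)$ set $S_v=\{X\in V(F_2(\Gamma)):v\in X\}$; each $2$-subset lies in exactly two of these sets, and the \emph{meeting relation} ``$|X\cap Y|=1$'' records when two vertices of $F_2(\Gamma)$ share an element of $V(\Gamma)$. The key claim, call it Step~1, is that every $\phi$ permutes the family $\{S_v\}_{v\in V(\Gamma)}$ (equivalently, $\phi$ preserves the meeting relation). Granting Step~1, the proof closes uniformly: define $\sigma$ by $\phi(S_v)=S_{\sigma(v)}$; then $\sigma\in\aut(\Gamma)$ because for any $x\notin\{a,b\}$ one has $ab\in E(\Gamma)$ if and only if the vertices $\{a,x\}$ and $\{b,x\}$ are adjacent in $F_2(\Gamma)$ (their symmetric difference is $\{a,b\}$), a condition $\phi$ respects; and finally, since $S_a\cap S_b=\{\{a,b\}\}$,
\[
\phi(\{a,b\})=\phi(S_a\cap S_b)=\phi(S_a)\cap\phi(S_b)=S_{\sigma(a)}\cap S_{\sigma(b)}=\{\sigma(a),\sigma(b)\},
\]
so $\phi=\sigma^{\ast}$. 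Thus the whole theorem reduces to Step~1 in each family.

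The real work, and the main obstacle, is therefore Step~1, and here the four families must be handled with their own invariants; the workhorse is the degree formula
\[
\deg_{F_2(\Gamma)}(\{a,b\})=\deg_{\Gamma}(a)+\deg_{\Gamma}(b)-2\,[ab\in E(\Gamma)],
\]
which sorts the vertices of $F_2(\Gamma)$ into recognizable types. For the star $K_{1,n}$ one checks that $F_2(K_{1,n})$ is the subdivision of a complete graph; outside a small degenerate value the ``hub'' vertices $\{c,\ell_i\}$ are separated from the degree-$2$ subdivision vertices by degree, any $\phi$ permutes the hubs, and this recovers $\{S_v\}$ with the centre class fixed. For the cycle $C_n$ the edge-vertices have degree $2$ and the non-edge-vertices degree $4$, and for $n\ge 5$ the incidence pattern between these two classes determines $\{S_v\}$; this is exactly the step that collapses at $n=4$, where $F_2(C_4)\cong K_{2,4}$ picks up the extra symmetries of $S_2\times S_4$ and Step~1 genuinely fails. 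For the fan $A_{1,n}$ and the wheel $W_{1,n}$, which are triangle-rich, degree alone does not suffice, so I would instead locate the apex and reconstruct the meeting relation from the clique/triangle structure of $F_2(\Gamma)$ together with degrees.

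In short, the hard part will be the clique- and degree-bookkeeping needed to prove Step~1 family by family, and the role of the hypothesis $n\neq 4$ is precisely to exclude the accidental regularities (cleanly exemplified by $F_2(C_4)\cong K_{2,4}$) that would otherwise let an automorphism mix the sets $S_v$ with the disjoint-pair structure. Once Step~1 is secured in a given family, the formal template above delivers $\aut(F_2(\Gamma))=\aut(\Gamma)$ with no further computation.
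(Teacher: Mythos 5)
Your reduction is sound and is genuinely different from the paper's. Granting your Step~1, the closing template is correct for $|V(\Gamma)|\geq 3$: recover $\sigma$ from $\phi(S_v)=S_{\sigma(v)}$, check $\sigma\in\aut(\Gamma)$ via the adjacency of $\{a,x\}$ and $\{b,x\}$, and conclude $\phi=\sigma^{\ast}$ from $S_a\cap S_b=\{\{a,b\}\}$. The paper never reconstructs $\sigma$ this way: for $C_n$, $K_{1,n}$ and $A_{1,n}$ it instead bounds $|\aut(F_2(\Gamma))|$ from above by $|\aut(\Gamma)|$ using the orbit--stabilizer theorem --- pick a vertex $x$ of $F_2(\Gamma)$ recognizable by its degree, bound $|O(x)|$, and show that the restriction homomorphism ${\rm Stab}(x)\to\aut(\Gamma\langle N(x)\rangle)$ has trivial kernel by propagating fixed points outward along a partition of $V(F_2(\Gamma))$ (or by induction on $n$ for the fan). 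Only for $W_{1,n}$ does it argue in your spirit, splitting the vertices into hub-pairs and rim-pairs and matching the two induced automorphisms of $C_n$. Your route has a uniform endgame and is the classical one for Johnson graphs; the paper's counting buys it the luxury of never having to identify the sets $S_v$ at all.

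The gap is that Step~1 --- the only nontrivial content --- is not proved for any of the four families, and the sketches offered do not yet show it goes through. For the star the subdivision-of-$K_n$ picture is convincing. For the cycle, however, ``the incidence pattern between the degree-$2$ and degree-$4$ classes determines $\{S_v\}$'' is exactly the assertion requiring an argument: $S_v$ contains only two degree-$2$ vertices, and recovering its remaining $n-3$ members from the graph needs something like the paper's layer analysis of the sets $L_q=\{\{i,i\oplus q\}\}$ and how consecutive layers attach. For the fan and wheel, ``locate the apex and reconstruct the meeting relation from the clique/triangle structure together with degrees'' is a direction rather than a proof; note moreover that $F_2(\Gamma)$ has a triangle only where $\Gamma$ does, so triangles are useless on the rim-pair part of the wheel, which induces a copy of $F_2(C_n)$ --- precisely where the paper must invoke its entire cycle theorem. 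Until Step~1 is established family by family, you have a correct reduction with its hypothesis unverified. (A side remark in your favor: $F_2(C_4)\cong K_{2,4}$ and hence $\aut(F_2(C_4))\cong S_2\times S_4$, of order $48$; this is right, and it contradicts the order-$16$ group $S_2\times D_8$ that the paper reports for this excluded case.)
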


\begin{theorem}\label{auto-path}
Let $P_n$ be the path graph of order $n>2$, with $n \neq 2k$. Then 
\[
\aut(F_k(P_n))=\aut(P_n).
\]
\end{theorem}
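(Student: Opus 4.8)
The plan is to show that the always-present inclusion $\aut(P_n)\le\aut(F_k(P_n))$ is an equality. For the easy inclusion, recall that every $\phi\in\aut(\Gamma)$ induces $\phi_\ast\in\aut(F_k(\Gamma))$ by $\phi_\ast(A)=\phi(A)$, and that $\phi\mapsto\phi_\ast$ is injective (the $k$-subsets separate the vertices of $\Gamma$ when $1\le k\le n-1$). Since $\aut(P_n)=\langle\rho\rangle\cong\mathbb{Z}_2$ with $\rho(i)=n+1-i$, this already produces two automorphisms of $F_k(P_n)$, so it remains to prove $|\aut(F_k(P_n))|\le 2$. I would first dispose of the trivial cases $k\in\{1,n-1\}$, where $F_k(P_n)\cong P_n$, so that $2\le k\le n-2$; and, using the complementation isomorphism $A\mapsto V(P_n)\setminus A$ between $F_k(P_n)$ and $F_{n-k}(P_n)$ (which commutes with $\rho_\ast$), I would assume without loss of generality that $2k<n$.

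Next I would record the metric structure of $F_k(P_n)$. Writing a vertex as its sorted token vector $a_1<\cdots<a_k$, tokens on a path cannot change relative order, so $d(A,B)=\sum_{i=1}^k|a_i-b_i|$: the lower bound holds because a single move changes one coordinate by $\pm 1$, and the upper bound by sliding discrepant tokens toward their targets. Hence $d(A,L)=\sum_i(a_i-i)$ and $d(A,R)=\sum_i(n-k+i-a_i)$, so $d(A,L)+d(A,R)=k(n-k)$ for every $A$, with $R$ the unique vertex at maximal distance $k(n-k)$ from $L$. I would also compute $\deg(A)=2m-[a_1=1]-[a_k=n]$, where $m$ is the number of maximal runs of consecutive tokens; this shows that $L=\{1,\dots,k\}$ and $R=\{n-k+1,\dots,n\}$ are the only vertices of degree $1$, and that the degree-$2$ vertices are exactly the block configurations $R_a=\{a{+}1,\dots,a{+}k\}$ ($1\le a\le n-k-1$) and $C_c=\{1,\dots,k{-}c\}\cup\{n{-}c{+}1,\dots,n\}$ ($1\le c\le k-1$), at distances $ak$ and $c(n-k)$ from $L$, respectively.

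Now let $\psi\in\aut(F_k(P_n))$. Since $\psi$ permutes $\{L,R\}$, after replacing $\psi$ by $\rho_\ast\circ\psi$ I may assume $\psi(L)=L$; then $\psi(R)=R$ and $\psi$ preserves the level function $d(\cdot,L)$. This is where the hypothesis $n\ne 2k$ enters: because $2k<n$, the smallest distance to $L$ among degree-$2$ vertices is $k$, attained only by $R_1=\{2,\dots,k{+}1\}$ (the next competitor $C_1$ sits at distance $n-k>k$), so $R_1$ is the unique degree-$2$ vertex nearest $L$ and $\psi(R_1)=R_1$. It is convenient to reindex vertices by the partition $\lambda(A)$ inside the $k\times(n-k)$ box given by $\lambda_i=a_{k+1-i}-(k+1-i)$, under which $F_k(P_n)$ becomes the cover graph of Young's lattice truncated to that box, with $L\leftrightarrow\varnothing$, $d(\cdot,L)=|\lambda|$, and adjacency given by adding or removing one cell. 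Since $L$ has degree $1$, its unique neighbour $(1)$ is fixed; its two neighbours other than $L$ are the size-$2$ partitions $(2)$ and $(1,1)$, and because $(1,1)$ is the unique neighbour of $(1)$ strictly closer to the fixed vertex $R_1=(1^k)$, we obtain $\psi(1,1)=(1,1)$ and $\psi(2)=(2)$.

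Finally I would prove $\psi=\mathrm{id}$ by strong induction on $|\lambda|$, using that $\psi$ fixes $L$ and preserves levels. If a partition $\mu$ has at least two removable corners, then $\mu$ is the unique common upper neighbour of the two (by induction fixed) partitions it covers, so $\psi(\mu)=\mu$. The only obstruction is a $\mu$ with a single removable corner, i.e. a rectangle $(a^b)$, which covers the unique partition $\lambda=(a^{b-1},a-1)$; here I would use that, apart from the bottom clash of $(2)$ and $(1,1)$ over $(1)$ already resolved, no two distinct rectangles share a parent, so every upper neighbour of $\lambda$ other than $\mu$ has two removable corners and is therefore already fixed, forcing $\psi(\mu)=\mu$ as well. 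Hence $\psi=\mathrm{id}$, and so $\aut(F_k(P_n))=\{\mathrm{id},\rho_\ast\}=\aut(P_n)$. The main obstacle is precisely this rigidity step: locally the graph is indistinguishable in its ``row'' and ``column'' directions, so one must break the $(2)\leftrightarrow(1,1)$ symmetry by a global argument, and that break is available exactly because $n\ne 2k$ makes the box non-square; for $n=2k$ the complementation map survives as a genuine extra automorphism.
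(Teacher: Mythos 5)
Your proposal is correct, but it takes a genuinely different route from the paper's. Both arguments share the same opening moves: the induced subgroup $\aut(P_n)\le\aut(F_k(P_n))$, the taxicab distance formula $d(u,v)=\sum_i|u_i-v_i|$, the classification of the degree-one and degree-two vertices, and the reduction to showing that the stabilizer of $\{1,\dots,k\}$ is trivial. From there the paper proceeds by induction on $n$: it treats $k=2$ separately via the split into vertices meeting $\{1,n\}$ and the rest, and $k\ge 3$ via the three sets of vertices containing $n$, containing $1$, or neither (using that the first two induce copies of $F_{k-1}(P_{n-1})$, plus explicit distance computations for the third), anchored on computer-verified base cases for small $n$. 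You instead give a single uniform argument with no induction on $n$ or $k$ and no machine verification: you reindex $F_k(P_n)$ as the Hasse diagram of Young's lattice in the $k\times(n-k)$ box and propagate fixed points level by level, using that a partition with two removable corners is the unique common cover of its (already fixed) children, while a rectangle is pinned down because its unique child has no other rectangular cover --- except for the clash of $(2)$ and $(1,1)$ over $(1)$, which you break with the fixed vertex $(1^k)=\{2,\dots,k+1\}$, the unique degree-two vertex nearest $\{1,\dots,k\}$. That step is exactly where $2k<n$ (after your complementation reduction) enters, and it makes the role of the hypothesis $n\ne 2k$ more transparent than in the paper, where it surfaces inside the case analysis showing $f$ fixes $\{1,\dots,k-1,n\}$. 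The trade-off: the paper stays within elementary token-graph manipulations but pays with case-splitting and computer checks; your rigidity argument is cleaner and fully self-contained, at the cost of importing the (standard, easily checked) facts that two distinct partitions of equal size have at most one common cover (their union) and that the only pair of distinct rectangles sharing a lower cover is $(2),(1,1)$ --- both of which I verified, so your argument is complete.
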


Theorem~\ref{elte2} is not true in general. For example, the automorphism group of the grid graph  $G_{2, 3}$ is of order $4$ but $|\aut(F_2(G_{2, 3}))|=8$. 

In the proofs of our results, we use elementary group theory, as in \cite{miraf2,miraf3,miraf}, and properties of token graphs. For the case of the token graphs of path graphs we obtain a formula for the distance between pair of vertices in $F_k(P_n)$ that generalizes the one given by Beaula et al.~\cite{beaula}.

The outline of this paper is as follows. In Section~\ref{sec1} we present some definitions, notation and some preliminary results. We show that $\aut(\Gamma)$ is a subgroup of $\aut(F_k(\Gamma))$, for every graph $\Gamma$. Also, we show that if $n=2k$, then $|\aut(F_k(\Gamma))|\geq 2|\aut(\Gamma)|$. The proof of Theorem~\ref{elte2} is worked for each case separately.  In Section~\ref{sec2} we show that $\aut(F_2(C_n))=\aut(C_n))$, for $n \neq 4$. In Section~\ref{sec3} we prove that if $\Gamma \in \{K_{1, n}, A_{1, n}, W_{1, n}\}$, then $\aut(F_2(\Gamma))=\aut(\Gamma)$, for $n \neq 4$. In Section~\ref{sec4} we prove Theorem~\ref{auto-path}. 

\section{Preliminaries and first results}\label{sec1}
In this paper, all our graphs are simple and finite, that is, a {\it graph} $\Gamma$ is a pair $(V(\Gamma), E(\Gamma))$ where $V(\Gamma)$ is a finite set and $E(\Gamma)$ is a subset of the set of all $2$-subsets of $V(\Gamma)$. An edge of a graph $\Gamma$ will be denoted by $\{u, v\}$ or $uv$, for $u, v \in V(\Gamma)$. We use $u \sim v$ to indicate that $u$ and $v$ are adjacent vertices, that is $uv \in E(\Gamma)$.  The {\it neighborhood} of a vertex $v$ is defined as $N(v)=\{u \in V(\Gamma) \colon uv \in E(\Gamma)\}$ and the {\it degree} $d(v)$ of $v$ is defined as $|N(v)|$. The neighborhood of a set of  vertices $X$ is defined as $N(X)=\bigcup_{x \in X}N(x) \setminus X$. Let $U$ be a subset of $V(\Gamma)$, we will use $\Gamma \langle U \rangle$ to denote the subgraph of $\Gamma$ induced by $U$. The graph difference $\Gamma - U$ is defined as the graph $\Gamma \langle  V(\Gamma) \setminus U \rangle$. 

Let $\Gamma_1$ and $\Gamma_2$ be two simple graphs. An {\it isomorphism} of $\Gamma_1$ onto $\Gamma_2$ is a bijection $\phi\colon V(\Gamma_1) \to V(\Gamma_2)$ such that $uv \in E(\Gamma_1)$ if and only if $\phi(u)\phi(v) \in E(\Gamma_2)$. An automorphism of a graph $\Gamma$ is an isomorphism of $\Gamma$ onto itself. The set of all automorphism of a graph $\Gamma$ is a subgroup of ${\rm Sym}(V(\Gamma))$, the group of all permutations of $V(\Gamma)$, an is denoted by $\aut(\Gamma)$. To obtain the automorphism group of graphs in general is a difficult problem. But it is possible to obtain this group for particular cases. 

It is well-known that $\aut(\Gamma)$ acts on $V(\Gamma)$. Let $v \in V(\Gamma)$, the {\it orbit} of $v$ is defined as $O(v)=\{f(v) \colon f \in \aut(\Gamma)\}$ and the {\it stabilizer} of $v$ is ${\rm Stab}(x)=\{f \in \aut(\Gamma) \colon f(x)=x\}$.  The orbit-stabilizer theorem says that $|\aut(\Gamma)|=|O(v)||{\rm Stab}(v)|$, for every $v \in  V(\Gamma)$.  Let $f \in \aut(\Gamma)$, we say that a vertex $x \in V(\Gamma)$ is a {\it fixed point} of $f$ if $f(x)=x$. We use ${\rm Fix}(f)$ to denote the set of fixed points of $f$. As usual, sometimes we write $\aut(\Gamma)=G$ instead of $\aut(\Gamma) \simeq G$.
We use $S_n$ to denote the symmetric group over $\{1, \dots, n\}$.   

The following observation (see \cite{ruy}) will be used, sometimes without reference, when we compute the degree of vertices in $F_k(\Gamma)$. 
\begin{observation}\label{grado}
The degree of a vertex $A$ in $F_k(\Gamma)$ is equal to the number of edges between $A$ and $V(\Gamma) \setminus A$.
\end{observation}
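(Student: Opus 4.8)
The plan is to exhibit an explicit bijection between the neighbors of $A$ in $F_k(\Gamma)$ and the edges of $\Gamma$ having exactly one endpoint in $A$. First I would unwind the definition of adjacency: a vertex $B$ of $F_k(\Gamma)$ is adjacent to $A$ precisely when the symmetric difference $A \triangle B = (A \setminus B) \cup (B \setminus A)$ is an edge of $\Gamma$, hence a set of size exactly $2$.

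The key observation is a cardinality count. Since $A$ and $B$ are both $k$-subsets of $V(\Gamma)$, we have $|A \setminus B| = |B \setminus A|$ (both equal $k - |A \cap B|$). Therefore $|A \triangle B| = 2|A \setminus B|$ is even, and it equals $2$ exactly when $|A \setminus B| = |B \setminus A| = 1$. Thus every neighbor $B$ of $A$ has the form $B = (A \setminus \{u\}) \cup \{v\}$ for a unique pair $u \in A$, $v \in V(\Gamma) \setminus A$, and the adjacency condition $A \triangle B = \{u, v\} \in E(\Gamma)$ says exactly that $uv$ is an edge of $\Gamma$ with $u \in A$ and $v \notin A$.

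I would then verify that the assignment $B \mapsto \{u, v\}$, where $\{u\} = A \setminus B$ and $\{v\} = B \setminus A$, is a bijection from the neighborhood of $A$ in $F_k(\Gamma)$ onto the set of edges of $\Gamma$ joining $A$ to $V(\Gamma) \setminus A$. Injectivity is immediate, since $B = (A \setminus \{u\}) \cup \{v\}$ is determined by $\{u, v\}$ together with $A$; surjectivity follows by checking that for any such edge $uv$ the set $B = (A \setminus \{u\}) \cup \{v\}$ is again a $k$-subset (we delete one element of $A$ and add one element outside $A$) and is adjacent to $A$. Counting both sides of the bijection then gives that $d(A)$ equals the number of edges between $A$ and $V(\Gamma) \setminus A$, as claimed.

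There is essentially no hard step here; the only point requiring care is the parity argument that forces $|A \setminus B| = 1$, which is what rules out the possibility that $A \triangle B$ could be a single edge while $A$ and $B$ differ in more than one element.
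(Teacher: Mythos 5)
Your proof is correct and complete: the parity count showing $|A \setminus B| = |B \setminus A| = 1$ whenever $A \triangle B$ is an edge, followed by the bijection $B \mapsto A \triangle B$ between $N(A)$ and the edges joining $A$ to $V(\Gamma) \setminus A$, is exactly the canonical argument. Note that the paper states this observation without proof (citing Fabila-Monroy et al.), so there is no in-paper proof to compare against; your argument is the standard one that the cited reference uses.
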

In particular if $\{x, y\} \in V(F_2(\Gamma))$, then  $d(\{x, y\})=d(x)+d(y)$ if $x \not \sim y$ and $d(\{x, y\})=d(x)+d(y)-2$ if $x \sim y$.

The following result, that appears in \cite{carba} and \cite{ruy}, will be useful in some of the proofs. 
\begin{proposition}\label{pborrado}
Let $X$ be a subset of $V(\Gamma)$ and $\Gamma'=\Gamma-X$. Then $F_k(\Gamma')$ is isomorphic to the graph obtained from $F_k(\Gamma)$ by deleting the vertices $A$ in $F_k(\Gamma)$ such that $A$ has al least one element of $X$.  
\end{proposition}

\subsection{First results}
Our first result shows an important relation between  $\aut(\Gamma)$ and $\aut(F_k(\Gamma))$. The proof of the following theorem is straightforward. 
\begin{theorem}\label{aut-sub}
Let $\Gamma$ be a graph. Then $\aut(\Gamma)$ is isomorphic to a subgroup of $\aut(F_k(\Gamma))$. In fact, if $\theta \in \aut(\Gamma)$, then the function $f_\theta \colon V(F_k(\Gamma)) \to V(F_k(\Gamma))$ defined as \[f_\theta(\{v_1, \dots, v_k\})=\{\theta(v_1), \dots, \theta(v_k)\}\] is an automorphism of $F_k(\Gamma)$.  
\end{theorem}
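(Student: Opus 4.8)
The plan is to show that the map $\theta \mapsto f_\theta$ is an injective group homomorphism from $\aut(\Gamma)$ into $\aut(F_k(\Gamma))$, from which the claim follows. This breaks into three tasks: verify that each $f_\theta$ is a well-defined automorphism of $F_k(\Gamma)$, verify that $\theta \mapsto f_\theta$ respects composition, and verify injectivity.

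First I would check that $f_\theta$ is well-defined and is a bijection on $V(F_k(\Gamma))$. Since $\theta$ is a bijection on $V(\Gamma)$, it sends the $k$-subset $\{v_1,\dots,v_k\}$ (with distinct entries) to the $k$-subset $\{\theta(v_1),\dots,\theta(v_k)\}$, which again has $k$ distinct elements; thus $f_\theta$ maps $k$-subsets to $k$-subsets. A clean way to see bijectivity is to observe that $f_{\theta^{-1}}$ is a two-sided inverse of $f_\theta$, which I would get for free once the homomorphism property below is established, since $f_{\mathrm{id}}$ is clearly the identity map.

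Next I would verify that $f_\theta$ preserves adjacency, which is the crux of the argument. Two vertices $A=\{v_1,\dots,v_k\}$ and $B$ of $F_k(\Gamma)$ are adjacent precisely when their symmetric difference $A \triangle B$ is an edge of $\Gamma$, say $A \triangle B = \{x,y\}$ with $xy \in E(\Gamma)$. The key observation is that applying the bijection $\theta$ commutes with the symmetric difference operation: $\theta(A) \triangle \theta(B) = \theta(A \triangle B)$, where I write $\theta(A)=f_\theta(A)$. Hence $f_\theta(A) \triangle f_\theta(B) = \{\theta(x),\theta(y)\}$, and since $\theta \in \aut(\Gamma)$ we have $xy \in E(\Gamma)$ if and only if $\theta(x)\theta(y) \in E(\Gamma)$. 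Therefore $A$ and $B$ are adjacent in $F_k(\Gamma)$ if and only if $f_\theta(A)$ and $f_\theta(B)$ are adjacent, so $f_\theta$ is an automorphism.

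Finally I would establish that $\theta \mapsto f_\theta$ is an injective homomorphism. For composition, one checks directly that $f_{\theta \circ \psi} = f_\theta \circ f_\psi$ by applying both sides to a generic $k$-subset and using $(\theta \circ \psi)(v_i) = \theta(\psi(v_i))$. For injectivity, if $f_\theta$ is the identity on $V(F_k(\Gamma))$, I would fix any edge $uv \in E(\Gamma)$ (assuming $\Gamma$ has at least one edge; the edgeless case is trivial since then $F_k(\Gamma)$ is also edgeless and the subgroup statement is vacuous) and look at how $f_\theta$ acts on $k$-subsets containing $u$ but not $v$, using the freedom in choosing the other $k-1$ elements to force $\theta(u)=u$ for every vertex $u$; this shows $\theta = \mathrm{id}$. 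The main obstacle is making the injectivity argument fully rigorous for small $n$ or when $\Gamma$ is sparse, where the supply of available $k$-subsets to test against is limited, so some care is needed to argue that $f_\theta = \mathrm{id}$ indeed forces $\theta$ to fix every individual vertex rather than merely permute them within subsets.
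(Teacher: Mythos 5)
Your proposal matches the paper's proof: both define the induced map $f_\theta$, verify bijectivity via $f_{\theta^{-1}}$, deduce edge-preservation from the fact that a bijection commutes with symmetric differences, and finish by checking that $\theta \mapsto f_\theta$ is an injective group homomorphism (a step the paper leaves as an exercise). One simplification for your injectivity step, which as written is needlessly tangled around an edge $uv$: no edge is needed at all --- if $\theta(u)\neq u$, choose a $k$-subset $A$ containing $u$ but avoiding $\theta(u)$ (possible since $k\leq n-1$), and then $f_\theta(A)\neq A$.
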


 The automorphism $f_\theta$ of $F_k(\Gamma)$, for $\theta \in \aut(\Gamma)$, defined in previous theorem is called the {\it automorphism induced } by $\theta$. When the context is clear, we use $\aut(\Gamma)$ as the set of automorphism of $\Gamma$ or as the subgroup of $\aut(F_k(\Gamma))$ induced by the automorphisms of $\Gamma$. We write $\aut(F_k(\Gamma))=\aut(\Gamma)$ to mean that every automorphism of $F_k(\Gamma)$ is induced by some  automorphism of $\Gamma$.   

Now, the following proposition shows that for $n=2k$, $\aut(F_k(\Gamma))$ has always more elements than $\aut(\Gamma)$.

\begin{theorem}
Let $\Gamma$ be a graph of order $n$, with $n$ even. The function $f_c \colon V(F_{n/2}(\Gamma)) \to V(F_{n/2}(\Gamma))$ defined as $f_c(A)=A^c$ is an automorphism, where $A^c=V(\Gamma) \setminus A$. Even more $f_c$ is not an induced automorphism of any $\phi \in \aut(\Gamma)$.
\end{theorem}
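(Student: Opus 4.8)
The plan is to verify the two assertions separately: that $f_c$ is a graph automorphism of $F_{n/2}(\Gamma)$, and that it cannot coincide with any induced automorphism $f_\phi$.

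First, since $|A|=n/2$ we have $|A^c|=n-n/2=n/2$, so $f_c$ really does map $V(F_{n/2}(\Gamma))$ into itself; moreover it is an involution, $f_c(f_c(A))=(A^c)^c=A$, and hence a bijection. The single identity that does all the work for adjacency is
\[
A^c \triangle B^c = A \triangle B,
\]
which I would establish by a membership check: a vertex lies in $A^c \triangle B^c$ exactly when it lies in precisely one of $A^c, B^c$, and simultaneously negating membership in both of $A$ and $B$ leaves this ``exactly one'' condition unchanged. Since adjacency in $F_{n/2}(\Gamma)$ is defined by the requirement that $A \triangle B$ be an edge of $\Gamma$, the identity immediately gives $A \sim B$ if and only if $f_c(A) \sim f_c(B)$, so $f_c \in \aut(F_{n/2}(\Gamma))$.

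For the second claim, suppose toward a contradiction that $f_c = f_\phi$ for some $\phi \in \aut(\Gamma)$, i.e.\ that $\phi(A)=A^c$ for every $(n/2)$-subset $A$. Fix any $v \in V(\Gamma)$ and set $w=\phi(v)$. Because $|\{v,w\}| \le 2 \le n/2$, I can choose an $(n/2)$-subset $A$ with $\{v,w\}\subseteq A$. Then on one hand $w=\phi(v) \in f_\phi(A)$, while on the other hand $f_\phi(A)=f_c(A)=A^c$, forcing $w \notin A$ and contradicting $w \in A$. Hence no such $\phi$ exists, and $f_c$ is not an induced automorphism.

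The argument is largely routine; the one genuinely delicate point is the quantifier in the last step, namely selecting a single set $A$ that contains both $v$ and its alleged image $\phi(v)$ at once, and this is exactly where the hypothesis $k=n/2 \ge 2$ enters. I would note that this hypothesis is necessary: for $n=2$ (so $k=1$) the statement fails, since $F_1(\Gamma)\simeq \Gamma$ and $f_c$ is induced by the transposition of the two vertices. Thus the intended and correct range is $n=2k\ge 4$.
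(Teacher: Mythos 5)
Your proof is correct and is the standard argument; the paper itself gives no details here (it simply defers to the proof of Theorem 3.5 in the Mirafzal--Ziaee reference for the Johnson graph case), and your write-up supplies exactly the expected steps: the identity $A^c \triangle B^c = A \triangle B$ for the automorphism part, and the choice of an $(n/2)$-set containing both $v$ and $\phi(v)$ for the non-induced part. Your observation that the second assertion genuinely requires $n\ge 4$ (for $n=2$ the complementation map \emph{is} induced by the transposition of the two vertices) is a valid catch; the theorem's statement omits this hypothesis, though the subsequent corollary in the paper does impose $n\ge 4$.
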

\begin{proof}
The proof is exactly the same that the given in the proof of Theorem 3.5 in \cite{miraf} for the case when $\Gamma$ is the complete graph $K_n$ (the graph $F_k(K_n)$ is isomorphic to the Johnson graph $J(n, k)$). 
\end{proof}
Clearly, the function $f_c$ in previous theorem is a fixed point free involution. 
\begin{corollary}
Let $n\geq 4$ be an even integer. If $\Gamma$ is a graph of order $n$, then 
\[
|\aut(F_{n/2}(\Gamma))|\geq 2|\aut(\Gamma)|.
\]

\end{corollary}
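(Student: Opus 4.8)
The plan is to combine the two theorems that immediately precede the corollary through a simple coset-counting argument. Write $G := \aut(F_{n/2}(\Gamma))$. By Theorem~\ref{aut-sub}, the map $\theta \mapsto f_\theta$ embeds $\aut(\Gamma)$ isomorphically onto a subgroup $H \leq G$, namely the subgroup of induced automorphisms; in particular $|H| = |\aut(\Gamma)|$. By the preceding theorem, the complementation map $f_c$ lies in $G$ but is not an induced automorphism of any $\phi \in \aut(\Gamma)$, so $f_c \in G \setminus H$.

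First I would form the left coset $f_c H = \{f_c \circ f_\theta : \theta \in \aut(\Gamma)\}$. Since the left cosets of a subgroup partition the group, $f_c H$ is either equal to or disjoint from $H$; and $f_c H = H$ would force $f_c \in H$, contradicting that $f_c$ is not induced. Hence $f_c H \cap H = \emptyset$. Because left multiplication by $f_c$ is a bijection of $G$, we have $|f_c H| = |H|$, and therefore
\[
|G| \geq |H \cup f_c H| = |H| + |f_c H| = 2|H| = 2|\aut(\Gamma)|,
\]
which is exactly the claimed inequality.

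The only place requiring care — and the single point on which the whole argument rests — is the disjointness $f_c H \cap H = \emptyset$. This is precisely the content of the assertion in the previous theorem that $f_c$ is not induced: if some composition $f_c \circ f_\theta$ equalled an induced automorphism $f_{\theta'}$, then $f_c = f_{\theta'} \circ f_\theta^{-1} = f_{\theta'\theta^{-1}}$ would itself be induced, a contradiction. So I expect no genuine obstacle beyond invoking the non-inducedness correctly; the hypothesis $n \geq 4$ enters only through that previous theorem (for $n = 2$ the map $f_c$ can coincide with an induced automorphism, e.g. on $K_2$, and the strict gain of a factor $2$ can fail). I would deliberately avoid trying to identify $G$ with $H \times \langle f_c \rangle$ or to describe the subgroup generated by $H$ and $f_c$, since the desired inequality needs only the disjointness of the two cosets and nothing about their product structure.
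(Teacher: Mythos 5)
Your argument is correct and is exactly the intended one: the paper leaves this corollary without proof because it follows immediately from the two preceding theorems via Lagrange's theorem (the induced subgroup $H$ is proper since $f_c \in G \setminus H$, so $[G:H] \geq 2$), which is precisely the coset-counting you carry out explicitly. No gaps; your careful justification of the disjointness $f_cH \cap H = \emptyset$ is the right place to invoke the non-inducedness of $f_c$.
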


\section{Automorphism group of the $2$-token graph of cycle graphs}\label{sec2}

In this section we prove that $\aut(F_2(C_n))=\aut(C_n)$, for $n \neq 4$. In Figure~\ref{fig1} we show $F_2(C_7)$. Let $D_{2n}$ denote the dihedral group of $2n$ elements. It is well-known that $\aut(C_n)=D_{2n}$. Using computer software,  we obtain that $\aut(F_2(C_4))=S_2 \times D_8$. First we present some observations and results that will be useful. In this section, $V(C_n)=\{1, 2 \dots, n\}$ and $E(C_n)=\{\{i, i+1\} \colon 1\leq i \leq n-1\} \cup \{\{1, n\}\}$.

\begin{observation}\label{obs-vecinos} Let $n \geq 4$ be an integer.
 \begin{enumerate}
\item If $v \in F_2(C_n)$, then $d(v) \in \{2, 4\}$.
\item $|N(u)\cap N(v)|\leq 2$, for every pair of vertices $u, v \in F_2(C_n)$.
\end{enumerate}
\end{observation}

We use $i\oplus j$ and $i \ominus j$ to denote the sum $(i+j) \bmod n$ and $(i-j) \bmod n$, respectively, with the convention that $n \equiv n \pmod n$.
Let $r=\floor{n/2}$. We define the following subsets of $V(F_2(C_n))$. 
\[
L_q=\{\{i, i \oplus q \} \colon 1\leq i \leq n\},
\]
where $1\leq q \leq r$.

 \begin{figure}
\begin{center}
\includegraphics[scale=0.6]{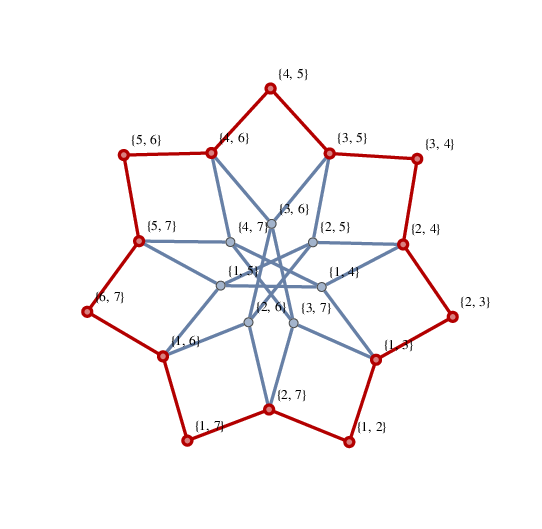}
\caption{The $2$-token graph of $C_7$, where $V(C_7)=\{1, \dots, 7\}$. The red subgraph is induced by $L_1\cup L_2$.} 
\label{fig1}
\end{center}
\end{figure}
The proof of the following proposition is an easy exercise. 
\begin{proposition}\label{properties2Cn} Let $n\geq 3$ be an integer and $r=\floor{n/2}$.
\begin{enumerate}  
\item \label{properties2Cn1} If $n$ is even, then $|L_{n/2}|=n/2$ and $|L_q|=n$, for $1\leq q <r$.
\item \label{properties2Cn2} If $n$ is odd, then $|L_q|=n$, for $1\leq q \leq r$.
\item \label{properties2Cn3} The set $L=\{L_1, \dots, L_r\}$ is a partition of $V(F_2(C_n))$.
\item \label{properties2Cn4} Let $n \geq 6$ and $3 \leq q \leq r$. If $\{i, i\oplus q\} \in L_{q}$, with $1\leq i \leq n$, then two neighbors, say $B, C$, of $\{i, i\oplus q\}$ belongs to $L_{q-1}$ and the vertex in $N(B) \cap N(C) \setminus\{\{i, i\oplus q\}\}$ belongs to $L_{q-2}$.
\item \label{properties2Cn5} Let $v \in V(F_2(C_n))$, then $d(v)=2$ if and only if $v \in L_1$. 
\end{enumerate}
\end{proposition}
In Figure~\ref{fig1} we show the subgraph of $F_2(C_7)$ induced by $L_1\cup L_2$.

\begin{figure}
\begin{center}
\includegraphics[scale=0.45]{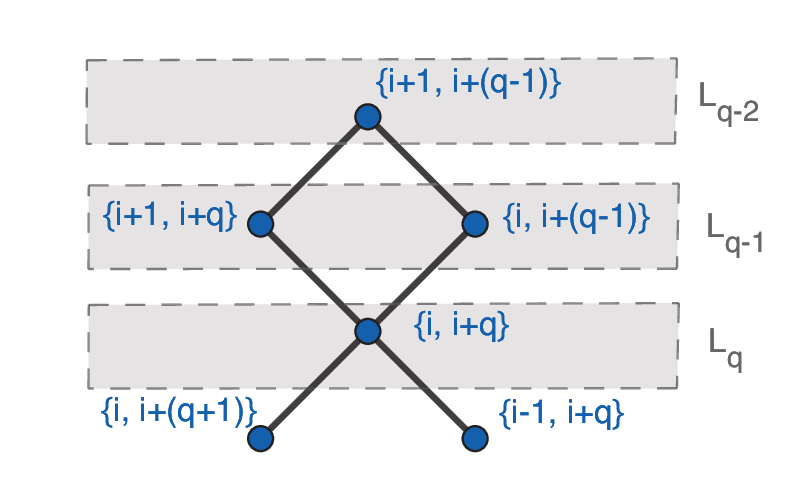}
\caption{An illustration of Proposition~\ref{properties2Cn}(\ref{properties2Cn4}).}
\label{fig2}
\end{center}
\end{figure}

\begin{proposition}\label{iso-cycle}
Let $n\geq 6$. The subgraph of $F_2(C_n)$ induced by $L_1 \cup L_2$ is isomorphic to $C_{2n}$.
\end{proposition}
\begin{proof}

First note that if $\{i, i\oplus1\}$ in $L_1$, then
\[
N(\{i, i\oplus1\})=\{\{i\ominus1, i\oplus1\}, \{i, i\oplus2\}\}, 
\]
and if $\{i, i\oplus2\}$ in $L_2$, then

 \[
 N(\{i, i\oplus2\})=\{\{i, i\oplus1\}, \{i, i\oplus3\}, \{i\oplus1, i\oplus2\}, \{i\ominus1, i\oplus2\}\}.
 \]
Since $n\geq 6$, then $N(\{i, i\oplus1\}) \subset L_2$, $N(\{i, i\oplus2\})\cap L_1=\{\{i, i\oplus1\}, \{i\oplus1, i\oplus2\}\}$ and $N(\{i, i\oplus2\})\cap L_3=\{\{i, i\oplus3\}, \{i\ominus1, i\oplus2\}\}$. Thus, it is easy to check that the function $\phi \colon V( L_1 \cup L_2) \to V(C_{2n})$ given by $\phi\left(\{i, i\oplus1\}\right)=2i-1$, for every $\{i, i\oplus1\} \in L_1$, and $\phi\left(\{i, i\oplus2\}\right)=2i$, for every $\{i, i\oplus2\} \in L_2$, is a graph isomorphism. 
\end{proof}

 We also need the following well-known observation.
\begin{proposition}\label{prop-fcycle} 
If $f \in Aut(C_n)$ fixes two adjacent vertices on $C_n$, then $f =id$.
\end{proposition}

Now we present our main result of this section. 
\begin{theorem} Let $n \geq 3$ be an integer. If $n\neq 4$, then $\aut(F_2(C_n))=\aut(C_n)$.
\end{theorem}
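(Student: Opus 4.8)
The plan is to show that every $g \in \aut(F_2(C_n))$ is induced by an automorphism of $C_n$; together with Theorem~\ref{aut-sub}, which embeds $\aut(C_n)$ as a subgroup of $\aut(F_2(C_n))$, this gives the desired equality. Since Proposition~\ref{iso-cycle} and Proposition~\ref{properties2Cn}(\ref{properties2Cn4}) assume $n\geq 6$, the cases $n=3$ and $n=5$ must be dealt with separately: for $n=3$ the token graph is the trivial one, $F_2(C_3)\simeq C_3$, so the claim is immediate, while $n=5$ can be settled by a direct verification (or by the argument below applied to the bipartite subgraph joining $L_1$ and $L_2$, which is again a $2n$-cycle). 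So assume $n\geq 6$ and fix $g\in\aut(F_2(C_n))$.

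First I would localize the two bottom levels. Because $g$ preserves degrees, Proposition~\ref{properties2Cn}(\ref{properties2Cn5}) shows $g$ permutes $L_1$, the set of degree-$2$ vertices. From the neighbor computations in the proof of Proposition~\ref{iso-cycle}, each vertex of $L_1$ has both neighbors in $L_2$ and every vertex of $L_2$ is adjacent to some vertex of $L_1$; hence $L_2=N(L_1)$, and so $g$ preserves $L_2$ as well. Consequently $g$ maps the subgraph induced by $L_1\cup L_2$ onto itself, and by Proposition~\ref{iso-cycle} it restricts to an automorphism $\bar g$ of $C_{2n}$.

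Next I would pin down $\bar g$ inside $\aut(C_{2n})=D_{4n}$. Under the isomorphism $\phi$ of Proposition~\ref{iso-cycle}, $L_1$ is the set of odd-indexed vertices of $C_{2n}$, i.e.\ one part of the unique proper $2$-coloring of the cycle. Since $\bar g$ preserves $L_1$, it lies in the parity-preserving subgroup $H$, the kernel of the index-$2$ homomorphism $D_{4n}\to\zz/2\zz$ that records whether an automorphism swaps the two color classes; thus $|H|=2n$. On the other hand, each of the $2n$ induced automorphisms $f_\theta$ ($\theta\in\aut(C_n)$) preserves $L_1$, hence restricts into $H$, and these restrictions are pairwise distinct: if $f_\theta$ and $f_{\theta'}$ agreed on $L_1\cup L_2$, then $f_{\theta'}^{-1}f_\theta$ would fix $L_1\cup L_2$ pointwise, and the rigidity step below forces $\theta=\theta'$. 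Counting then shows the induced automorphisms exhaust $H$, so $\bar g=\overline{f_\theta}$ for some $\theta\in\aut(C_n)$. (Concretely, one may instead match $g$ to an $f_\theta$ on a single vertex of $L_1$ together with one of its two neighbors in $C_{2n}$, which is possible since $\aut(C_n)$ acts transitively on the edges of $C_n$ with both orientations; then $\overline{f_\theta^{-1}g}$ fixes two adjacent vertices of $C_{2n}$ and is the identity.)

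Finally comes the rigidity step, which I expect to be the crux. Set $h:=f_\theta^{-1}\circ g\in\aut(F_2(C_n))$; by the previous paragraph $h$ fixes $L_1\cup L_2$ pointwise. I would prove by induction on $q$ that $h$ fixes $L_q$ pointwise for all $1\leq q\leq r$. The base cases $q\in\{1,2\}$ hold by construction. For $3\leq q\leq r$, take any $v=\{i,i\oplus q\}\in L_q$; by Proposition~\ref{properties2Cn}(\ref{properties2Cn4}) it has two neighbors $B,C\in L_{q-1}$ whose other common neighbor $w$ lies in $L_{q-2}$, and by Observation~\ref{obs-vecinos}(2) in fact $N(B)\cap N(C)=\{v,w\}$. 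Since $h$ fixes $B$ and $C$, it permutes $\{v,w\}$; since it fixes $w$ by the inductive hypothesis, it must fix $v$. As $v$ ranges over all of $L_q$, $h$ fixes $L_q$ pointwise, completing the induction. Hence $h$ fixes every vertex of $F_2(C_n)$, so $h=\mathrm{id}$ and $g=f_\theta$ is induced. The two delicate points are verifying $L_2=N(L_1)$, so that the restriction to $C_{2n}$ is legitimate, and ensuring the common-neighbor set in the induction has size exactly two, so that fixing $w$ pins down $v$; both are supplied precisely by Observation~\ref{obs-vecinos}(2) and Proposition~\ref{properties2Cn}(\ref{properties2Cn4}).
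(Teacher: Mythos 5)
Your proposal is correct and follows essentially the same route as the paper: both rest on the partition into the levels $L_q$, the identification of the subgraph induced by $L_1\cup L_2$ with $C_{2n}$ (Proposition~\ref{iso-cycle}), and the identical level-by-level rigidity induction via Proposition~\ref{properties2Cn}(\ref{properties2Cn4}) and Observation~\ref{obs-vecinos}. The only difference is bookkeeping at the top: the paper bounds $|\aut(F_2(C_n))|\leq 2n$ by the orbit--stabilizer theorem applied to the vertex $\{1,n\}$ and its two-vertex neighborhood, whereas you directly match $\bar g$ with the restriction of an induced automorphism inside the parity-preserving subgroup of $\aut(C_{2n})$ before applying rigidity to $f_\theta^{-1}g$; the two devices are interchangeable.
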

\begin{proof}

If $n=3$, then $F_2(C_3)\simeq C_3$ and the result follows for this case. Suppose now that $n\geq 5$. Let $\Gamma=F_2(C_n)$. In the view of Theorem~\ref{aut-sub}, it is enough to show that $|\aut(\Gamma)|\leq 2n$. Let $x \in V(\Gamma)$ be the vertex $\{1, n\}$ and let $\Gamma_1=\Gamma \langle N(x) \rangle$. Since $N(x)=\{\{1, n-1\}, \{2,n\}\}$, then $|\aut(\Gamma_1)|= 2$. Let $\varphi \colon {\rm Stab}(x) \to \aut(\Gamma_1)$ be the function defined by $\varphi(f)=f|_{\Gamma_1}$. As $\varphi$ is a group homomorphism we have that $|{\rm Stab}(x)|\leq |{\rm Ker}~\varphi||\aut(\Gamma_1)|\leq2|{\rm Ker}~\varphi|$. We will prove that ${\rm Ker}~\varphi=\{id\}$. 

Let $f \in {\rm Ker}~\varphi$. By Proposition~\ref{properties2Cn}(\ref{properties2Cn3}) it is enough to show that $f(L_q) \subset  {\rm Fix}(f)$, for every $q \in \{1, \dots, \floor{n/2}\}$. 

First we prove that $f(L_1\cup L_2) \subset {\rm Fix}(f)$. Let $\Gamma_2=\langle L_1 \cup L_2\rangle$. By Proposition~\ref{iso-cycle} it follows that $\Gamma_2\simeq C_{2n}$. Note that $f(L_1\cup L_2)=L_1\cup L_2$. Indeed,  $L_1$ is equal to the set of vertices of degree $2$ in $\Gamma$ and the vertices in $L_2$ are the unique vertices in $\Gamma$ that have two of its neighbors in $L_1$ (see Figure~\ref{fig1} for an example). Then $f|_{L_1\cup L_2} \in \aut(\Gamma_2)$.  Since $f \in {\rm Ker}~\varphi$ and $f \in {\rm Stab}(x)$, we have that $f(\{1, n\})=\{1, n\}$, $f(\{1, n-1\})=\{1, n-1\}$ and $f(\{2, n\})=\{2, n\}$. But $\{1, n\}$ and $\{2, n\}$ are adjacent vertices in $\Gamma$ and then all the vertices in $L_1\cup L_2$ are fixed by $f$ (by Proposition~\ref{prop-fcycle}). If $n=5$ we are done. For $n\geq 6$ we will prove that $f(L_q)\subset {\rm Fix}(f)$, for $3 \leq q \leq  \floor{n/2}$. Suppose by induction that $f(L_j)\subset {\rm Fix}(f)$, for $2\leq j < q \leq \floor{n/2}$. Let $u \in L_q$, that is $u=\{i, i\oplus q\}$, for some $i \in \{1, \dots, n\}$. By Proposition~\ref{properties2Cn}(\ref{properties2Cn4}), we have that two neighbors of $u$, say $v$ and $w$, belongs to $L_{q-1}$ and the vertex $z$ in $N(v)\cap N(w) \setminus \{u\}$ belongs to $L_{q-2}$ (see Figure~\ref{fig2}). By hypothesis, $\{v, w, z\} \subset {\rm Fix}(f)$ which implies that $f(u) \in N(v)\cap N(w)$ and hence $f(u)=u$ (we are using Observation~\ref{obs-vecinos} that shows that $|N(v)\cap N(w)|\leq 2$). Then $f(L_q) \subset {\rm Fix}(f)$ as desired. 

Therefore ${\rm Ker}~\varphi=\{id\}$ and hence $|{\rm Stab}(x)| \leq 2$. Now, since $x \in L_1$, by Proposition~\ref{properties2Cn}(\ref{properties2Cn5}) it follows that $|O(x)| \leq n$. Finally, by the orbit-stabilizer theorem we obtain $|\aut(\Gamma)|=|O(x)||{\rm Stab}(x)| \leq 2n$ which concludes the proof of the theorem. 
\end{proof}
The following conjecture is based on experimental results obtained by computer. 
\begin{conjecture}
Let $n$ be an integer and $3\leq k \leq n/2$. If $k\neq n/2$ then $\aut(F_k(C_n))=\aut(C_n)$ and if $k=n/2$, then $\aut(F_k(C_n))=S_2 \times \aut(C_n)$. 
\end{conjecture}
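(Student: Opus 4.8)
The plan is to prove the conjecture by adapting the strategy used above for $k=2$, running the two cases $k\neq n/2$ and $k=n/2$ in parallel. In both cases $\aut(C_n)=D_{2n}$ embeds in $\aut(F_k(C_n))$ by Theorem~\ref{aut-sub}, and when $n=2k$ the complementation map $f_c(A)=V(C_n)\setminus A$ is an additional, non-induced automorphism (by the theorem on the complementation map $f_c$ above), so that $S_2\times\aut(C_n)$ embeds in $\aut(F_k(C_n))$. Hence it suffices to prove the matching upper bounds $|\aut(F_k(C_n))|\le 2n$ when $k\neq n/2$ and $|\aut(F_k(C_n))|\le 4n$ when $k=n/2$. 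As before, I would obtain these from the orbit--stabilizer theorem applied to a carefully chosen vertex.

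The natural choice is an \emph{interval} $x=\{i,i+1,\dots,i+k-1\}$. By Observation~\ref{grado}, the degree of a $k$-subset $A$ of $C_n$ equals its number of boundary edges, which is twice the number of maximal arcs of $A$; hence $d(A)=2$ precisely when $A$ is an interval, and there are exactly $n$ such intervals. Since every automorphism preserves degree, the set of intervals is invariant and therefore $|O(x)|\le n$. Moreover the two neighbours of $x$, obtained by sliding the left or the right endpoint outward, are non-adjacent in $F_k(C_n)$ (their symmetric difference is the four-element set $\{1,k,k+1,n\}$ up to rotation), so the induced subgraph $\Gamma_1=F_k(C_n)\langle N(x)\rangle$ consists of two isolated vertices and $|\aut(\Gamma_1)|=2$. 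Exactly as in the $k=2$ argument, the restriction homomorphism $\varphi\colon\mathrm{Stab}(x)\to\aut(\Gamma_1)$ then gives $|\mathrm{Stab}(x)|\le 2|\mathrm{Ker}\,\varphi|$.

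The crux is to control $\mathrm{Ker}\,\varphi$. For $k\neq n/2$ I would show $\mathrm{Ker}\,\varphi=\{\mathrm{id}\}$, which yields $|\mathrm{Stab}(x)|\le 2$ and hence $|\aut(F_k(C_n))|\le 2n$. For this one needs a stratification of all $k$-subsets playing the role of the layers $L_q$, together with a local rigidity statement: if $f\in\mathrm{Ker}\,\varphi$ fixes a suitable set of nearby vertices, then degree constraints and bounds on the number of common neighbours (the analogues of Observation~\ref{obs-vecinos} and Proposition~\ref{properties2Cn}) force $f$ to fix the next vertex as well, so an induction propagates fixedness from the intervals to every vertex. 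This is exactly where the main difficulty lies: for $k\ge 3$ there is no single scalar parameter like the gap $q$ between two tokens, so the stratification must be organized by the number of arcs and by the pattern of gaps between them, and the rigidity lemma is harder to establish because a vertex can have several neighbours in the same stratum and the common-neighbour counts are no longer uniformly bounded by $2$. Making this propagation argument precise for general $k$ is the step I expect to be genuinely laborious.

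Finally, for $k=n/2$ the same skeleton applies but the extra automorphism $f_c$ must be accounted for. Here each interval $\{i,\dots,i+n/2-1\}$ has as its complement the interval $\{i+n/2,\dots,i+n-1\}$, so $f_c$ permutes the $n$ intervals and the composition $r^{-n/2}\circ f_c$ (with $r$ the rotation) fixes $x$; thus $\mathrm{Stab}(x)$ now contains this additional involution, and I would expect $|\mathrm{Ker}\,\varphi|=2$, giving $|\mathrm{Stab}(x)|\le 4$ and $|\aut(F_k(C_n))|\le 4n=|S_2\times\aut(C_n)|$. The orbit bound $|O(x)|\le n$ is unchanged, since all intervals already lie in a single $D_{2n}$-orbit. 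Once the propagation argument of the previous paragraph is adapted to show that the only nontrivial element of $\mathrm{Ker}\,\varphi$ is induced by $r^{-n/2}\circ f_c$, both cases of the conjecture follow from the orbit--stabilizer theorem.
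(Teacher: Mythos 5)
First, a point of order: the statement you were given is presented in the paper only as a \emph{conjecture}, supported by computer experiments, so there is no proof in the paper to compare yours against --- only the $k=2$ argument that your plan imitates. The scaffolding you set up is correct as far as it goes: the embedding $D_{2n}\le \aut(F_k(C_n))$ from Theorem~\ref{aut-sub}; the fact that for $n=2k$ the complementation map $f_c$ commutes with every induced automorphism and is not itself induced, so that $S_2\times\aut(C_n)$ embeds; the identification of the degree-$2$ vertices with the $n$ intervals (the degree of $A$ is twice the number of maximal arcs of $A$, by Observation~\ref{grado}); the computation $N(x)=\{\{2,\dots,k,n\},\{1,\dots,k-1,k+1\}\}$ with the two neighbours non-adjacent; and the consequent bounds $|O(x)|\le n$ and $|{\rm Stab}(x)|\le 2|{\rm Ker}\,\varphi|$. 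All of these check out.

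The problem is that the proposal is a plan rather than a proof, and the step you defer is where the entire content of the conjecture lives: showing that ${\rm Ker}\,\varphi$ is trivial when $k\neq n/2$, and of order at most $2$ when $k=n/2$. In the paper's $k=2$ proof this is done by induction over the layers $L_q$, and that induction relies on three facts that all fail for $k\ge 3$: the layers are linearly ordered by the single parameter $q$; each vertex of $L_q$ has exactly two neighbours in $L_{q-1}$ whose other common neighbour lies in $L_{q-2}$ (Proposition~\ref{properties2Cn}(\ref{properties2Cn4})); and $|N(u)\cap N(v)|\le 2$ for all $u,v$ (Observation~\ref{obs-vecinos}). For general $k$ the natural strata (indexed by the multiset of arc lengths and gap lengths) are not linearly ordered, a vertex can have many neighbours in a ``lower'' stratum, and the common-neighbour bound is no longer available, so the propagation of fixedness does not go through as stated; you acknowledge this but do not supply the replacement well-founded order and rigidity lemma that would be needed. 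Likewise, for $k=n/2$ you correctly observe that ${\rm Ker}\,\varphi$ contains a non-identity, non-induced element (a composition of $f_c$ with a suitable reflection), but the claim $|{\rm Ker}\,\varphi|\le 2$ is again exactly the missing propagation step. So the proposal reproduces the outer shell of the $k=2$ argument faithfully, but the conjecture's essential difficulty is named rather than resolved, and the statement remains unproved.
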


\section{Automorphism groups of the $2$-token graphs of star, fan and wheel graphs} \label{sec3}
In this section we obtain the automorphism groups of the $2$-token graphs of star, fan and wheel graphs. In all such cases we have that if $|G|\geq 5$, then $\aut(F_2(G))=\aut(G)$.

\subsection*{Star graphs}
First we consider the case of the star graph $K_{1, n-1}$. In Figure~\ref{fig3} we show $F_2(K_{1, 6})$. For small star graphs we have that $F_2(K_{1, 2}) \simeq P_3$ and $F_2(K_{1, 3})\simeq C_6$ and hence $\aut(F_2(K_{1, 2})) \simeq S_2$ and $\aut(F_2(K_{1, 3}))\simeq D_6$. In this section $V(K_{1, n-1})=\{1, 2, \dots, n\}$, where $1$ is the vertex of degree $n-1$. 

\begin{figure}
\begin{center}
\includegraphics[scale=0.7]{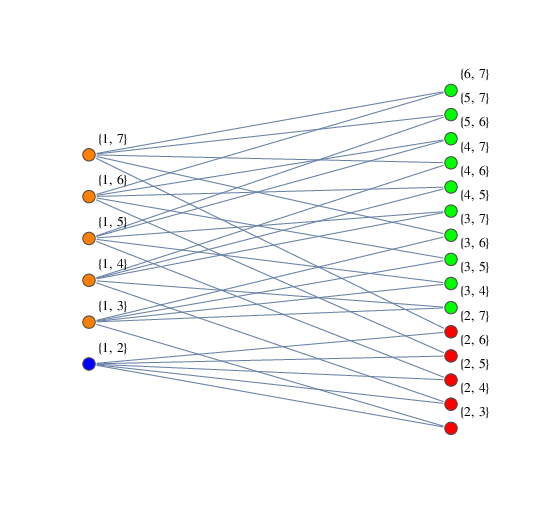}
\caption{The $2$-token graph of $K_{1, 6}$ ($V(K_{1, 6})=\{1, \dots, 7\}$, with $d(1)=7$). The set $B, R, O, G$ defined in the proof of Theorem~\ref{aut-k1n} are the  set of blue, red, orange and green vertices, respectively. }
\label{fig3}
\end{center}
\end{figure}

\begin{theorem}\label{aut-k1n}
Let $n\geq 5$ be an integer. Then 
$\aut(F_2(K_{1,n-1}))= \aut(K_{1, n-1})$.
\end{theorem}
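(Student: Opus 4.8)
The plan is to split the vertex set of $F_2(K_{1,n-1})$ into two naturally defined classes and to show that one of them rigidly determines the other. Write $V(K_{1,n-1})=\{1,\dots,n\}$ with $1$ the center. Partition $V(F_2(K_{1,n-1}))$ into the set $C=\{\{1,i\}\colon 2\le i\le n\}$ of $2$-subsets containing the center and the set $D=\{\{i,j\}\colon 2\le i<j\le n\}$ of pairs of leaves. Using Observation~\ref{grado} (or the degree formula stated just after it), every vertex of $C$ has degree $n-2$ and every vertex of $D$ has degree $2$. The first step is to note that for $n\ge 5$ these two values differ, so any $f\in\aut(F_2(K_{1,n-1}))$ must preserve $C$ and $D$ setwise. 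This is exactly the point where the hypothesis $n\ge 5$ (equivalently $n\neq 4$) is used: when $n=4$ the two degrees coincide and the classes can no longer be separated.

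The key structural step is a short ``unique common neighbor'' computation with symmetric differences. I would record as a small claim that $C$ and $D$ are each independent sets, that $\{1,i\}$ is adjacent precisely to the leaf-pairs $\{i,j\}$ containing $i$, and, most importantly, that two distinct vertices $\{1,i\},\{1,j\}$ of $C$ have exactly one common neighbor, namely $\{i,j\}\in D$. (Indeed a leaf-pair $\{a,b\}$ is adjacent to $\{1,i\}$ iff $i\in\{a,b\}$, so adjacency to both $\{1,i\}$ and $\{1,j\}$ forces $\{a,b\}=\{i,j\}$.) This single fact is what lets the action on $C$ control the action on all of $D$.

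With this in hand, consider the restriction map $\Phi\colon \aut(F_2(K_{1,n-1}))\to {\rm Sym}(C)$ given by $\Phi(f)=f|_C$; it is well defined because $f(C)=C$, and it is clearly a group homomorphism. I claim $\Phi$ is injective. If $f|_C=\mathrm{id}$, then for every leaf-pair $\{i,j\}$ the image $f(\{i,j\})$ is forced to be the unique common neighbor of $f(\{1,i\})=\{1,i\}$ and $f(\{1,j\})=\{1,j\}$, namely $\{i,j\}$ itself; hence $f$ fixes $D$ pointwise as well and $f=\mathrm{id}$. Consequently $\aut(F_2(K_{1,n-1}))$ embeds into ${\rm Sym}(C)\cong S_{n-1}$, which gives the upper bound $|\aut(F_2(K_{1,n-1}))|\le (n-1)!$.

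Finally, since $\aut(K_{1,n-1})\cong S_{n-1}$ (the center must be fixed and the $n-1$ leaves may be permuted arbitrarily), Theorem~\ref{aut-sub} supplies $(n-1)!$ distinct induced automorphisms inside $\aut(F_2(K_{1,n-1}))$, and these already surject onto ${\rm Sym}(C)$ under $\Phi$. Combined with the upper bound this forces $|\aut(F_2(K_{1,n-1}))|=(n-1)!=|\aut(K_{1,n-1})|$, with every automorphism induced, which is precisely the assertion $\aut(F_2(K_{1,n-1}))=\aut(K_{1,n-1})$. I expect the only genuine work to be the common-neighbor computation of the second paragraph; the remaining argument is then a clean counting/embedding argument that bypasses the orbit-stabilizer bookkeeping. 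The main subtlety to guard against is the possibility of an automorphism mixing $C$ and $D$, and that is ruled out solely by the degree separation, hence by $n\ge 5$.
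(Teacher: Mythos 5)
Your proof is correct, but its global structure differs from the paper's. The paper fixes the base vertex $x=\{1,2\}$, bounds $|O(x)|\le n-1$ by the degree count, bounds $|{\rm Stab}(x)|\le (n-2)!$ by restricting to $N(x)=\{\{2,i\}\colon 3\le i\le n\}$ and showing the kernel of that restriction is trivial via a four-stage propagation of fixed points over the partition $\{B,R,O,G\}$, and then invokes the orbit--stabilizer theorem. You instead restrict the entire automorphism group to the class $C$ of center-pairs and prove that this restriction homomorphism into ${\rm Sym}(C)\cong S_{n-1}$ is injective, using the observation that two distinct center-pairs $\{1,i\},\{1,j\}$ have the unique common neighbor $\{i,j\}$. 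The two arguments share their essential ingredients --- the degree separation of the two vertex classes (which is where $n\ge 5$ enters in both), and the fact that a leaf-pair is determined by its neighbors in $C$ (the paper uses this in the form $|N(y)\cap N(z)|<2$ for $y,z\in\mathcal{R}$ when fixing the set $G$) --- but your packaging is more economical: it avoids choosing a base point, the orbit--stabilizer bookkeeping, and the intermediate stages, and delivers the bound $|\aut(F_2(K_{1,n-1}))|\le (n-1)!$ in one step from the embedding into ${\rm Sym}(C)$. What the paper's scheme buys in exchange is uniformity: it is the same template the authors reuse for the cycle, fan and path cases, where no single vertex class admits such a clean injective restriction into a symmetric group.
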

\begin{proof}
Let $\Gamma=F_2(K_{1,n-1})$. Since graph $K_{1, n-1}$ is bipartite, it follows that $\Gamma$ is also bipartite (\cite[Proposition 1]{ruy}). In fact, a bipartition of $V(\Gamma)$ is $\{\mathcal{B}, \mathcal{R}\}$, where 
\[
\mathcal{B}=\{\{1, i\} \colon 2\leq i \leq n  \},\]
 and $ \mathcal{R}=V(\Gamma)) \setminus \mathcal{B}$. Note that if $x \in \mathcal{B}$, then $d(x)=n-2$ and if $x \in  \mathcal{R}$, then $d(x)=2$ (see Figure~\ref{fig3} for an example). Since $n \geq 5$, then $\Gamma$ has exactly $n-1$ vertices of degree $n-2$. Let $B=\{\{1, 2\}\}$ and $O=\{\{1, i\} \colon 3\leq i \leq n\}$. Clearly $\{B, O\}$ is a partition of $\mathcal{B}$. Let $R=\{\{2, i\} \colon 3\leq i \leq n\}$ and $G=\{\{i, j\} \colon 3\leq i <j \leq n\}$. Clearly $R=N(\{1, 2\})$ and $\{R, G\}$ is a partition of $\mathcal{R}$. In Figure~\ref{fig3} we show the partition $\{B, O, R, G\}$ of $V(\Gamma)$.
 
Now, by Theorem~\ref{aut-sub}, $\aut(K_{1, n-1}) \leq \aut(\Gamma)$. It is well-known that $\aut(K_{1, n-1})=S_{n-1}$ and hence it is enough to show that $|\aut(\Gamma)|\leq (n-1)!$ 

Let $x$ be the vertex $\{1, 2\}$ in $\Gamma$ and let $\Gamma_1=\Gamma \langle N(x) \rangle$. In this case $N(x)=R$ and $\Gamma_1=\overline{K_{n-2}}$. Therefore $\aut(\Gamma_1)=S_{n-2}$.  We have that $|O(x)|\leq n-1$ because $x$ has degree $n-2$ and there are exactly $n-1$ vertices in $\Gamma$ of degree $n-2$. If an automorphism $f$ of $\Gamma$ belongs to ${\rm Stab}(x)$, then $f(N(x))=N(x)$ and hence $f\restrict{N(x)} \in \aut(\Gamma_1)$.  Let $\varphi \colon {\rm Stab}(x) \to \aut( \Gamma_1)$ be the function defined by $\varphi(f)=f\restrict{N(x)}$. We will prove that ${\rm Ker}~\varphi=\{id\}$. 

Let $f \in {\rm Ker}~\varphi$. We will prove that $f(Y) \subset{\rm Fix}(f)$, for every $Y \in \{B, O, R, G\}$ in the following order: $B, R, O, G$. Since $f \in {\rm Stab}(x)$ and $f \in {\rm Ker}~\varphi$, then $f(\{1, 2\})=\{1, 2\}$ and $f(y)=y$, for every $y \in N(x)$, respectively. That is, $f(B) \cup f(R) \subset {\rm Fix}(f)$. 
Now we prove that $f(O) \subset  {\rm Fix}(f)$. Let $w \in O$, that is $w=\{1, i\}$, for some $i \in \{3, \dots, n\}$. Note that $N(\{2, i\})=\{\{1, 2\}, \{1, i\}\}$ and hence $\{1, i\} \in N(\{2, i\})$. Since $f(\{2, i\})=\{2, i\}$, then $f(N(\{2, i\}))=N(\{2, i\})$. But $f(\{1, 2\})=\{1, 2\}$ and then $f(\{1, i\})=\{1, i\}$, for every $i \in \{3, \dots, n\}$. Then $f(O) \subset {\rm Fix}(f)$. Finally, let $w  \in G$, that is $w=\{r, s\}$, with $r, s \in \{3, \dots, n\}$, $r\neq s$.  We have that $N(\{r, s\})=\{\{1, r\}, \{1, s\}\}$, that is a subset of $O$. Then $N(\{r, s\}) \subset {\rm Fix}(f)$. Since $|N(y)\cap N(z)|<2$, for every $y, z \in  \mathcal{R}$, with $y \neq z$, then $f(\{r, s\})=\{r, s\}$, for $r, s \in \{3, \dots, n\}$, $r\neq s$. Therefore $f(G) \subset  {\rm Fix}(f)$. We conclude that $f=id$ because $\{B, O, R, G\}$ is a partition of $V(\Gamma)$.  

In this way ${\rm Ker}~\varphi=\{id\}$ and then $|{\rm Stab}(x)| \leq  (n-2)!$. Now we use that $|\aut(\Gamma)|=|O(x)||{\rm Stab}(x)|$ to obtain that $|\aut(\Gamma)|\leq (n-1)(n-2)!=(n-1)!$ as desired. 
\end{proof}

The following conjecture is based on experimental results.
\begin{conjecture}
Let $n$ be an integer and $3\leq k \leq n/2$. If $k\neq n/2$ then $\aut(F_k(K_{1, n}))=\aut(K_{1, n})$, and if $k=n/2$, then $\aut(F_k(K_{1, n}))=S_2 \times \aut(K_{1, n})$. 
\end{conjecture}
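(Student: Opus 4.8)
The plan is to push the orbit–stabilizer argument of Theorem~\ref{aut-k1n} from $k=2$ up to arbitrary $k$ with $3\le k\le n/2$. Write $c$ for the central vertex and $\ell_1,\dots,\ell_n$ for the leaves, and set $\Gamma=F_k(K_{1,n})$. Since $K_{1,n}$ is bipartite, $\Gamma$ is bipartite with parts $\mathcal B=\{A\in V(\Gamma)\colon c\in A\}$ and $\mathcal R=\{A\in V(\Gamma)\colon c\notin A\}$. Adjacency in $\Gamma$ amounts to swapping $c$ for a missing leaf, so by Observation~\ref{grado} every vertex of $\mathcal R$ has degree $k$, while a vertex $A\in\mathcal B$ has degree equal to the number of leaves it omits, namely $n-k+1$; these two values agree exactly when $k$ equals half the order of $K_{1,n}$. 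Hence for $k\neq n/2$ the parts have distinct degrees, so every automorphism preserves $\{\mathcal B,\mathcal R\}$ setwise — the analogue of the non-swapping step. By Theorem~\ref{aut-sub}, $\aut(K_{1,n})\simeq S_n$ embeds in $\aut(\Gamma)$, so for $k\neq n/2$ it suffices to prove $|\aut(\Gamma)|\le n!$.

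I would fix a base vertex $x\in\mathcal B$ and apply the orbit–stabilizer theorem. The orbit bound is now immediate: since automorphisms preserve $\mathcal B$, we have $O(x)\subseteq\mathcal B$ and hence $|O(x)|\le|\mathcal B|=\binom{n}{k-1}$. For the stabilizer, note that $N(x)\subseteq\mathcal R$ is an independent set, so $\Gamma\langle N(x)\rangle=\overline{K_{\,n-k+1}}$ and $\aut(\Gamma\langle N(x)\rangle)=S_{n-k+1}$. Let $\varphi\colon\mathrm{Stab}(x)\to\aut(\Gamma\langle N(x)\rangle)$ be the restriction homomorphism. Then $|\mathrm{Stab}(x)|\le|\mathrm{Ker}\,\varphi|\cdot(n-k+1)!$, and since the target product is $|O(x)|\,|\mathrm{Stab}(x)|\le n!=\binom{n}{k-1}(k-1)!\,(n-k+1)!$, everything comes down to proving $|\mathrm{Ker}\,\varphi|\le(k-1)!$.

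This is where the main obstacle lies, and why the statement is only a conjecture. For $k=2$ one had $(k-1)!=1$ and the argument reduced to $\mathrm{Ker}\,\varphi=\{\mathrm{id}\}$; for $k\ge 3$ this is simply false. Indeed, the induced automorphisms coming from the symmetric group $S_{k-1}$ permuting the ``inner'' leaves of $x$ fix $x$ setwise and fix each neighbor $(x\setminus\{c\})\cup\{\ell\}$ setwise, so they already sit inside $\mathrm{Ker}\,\varphi$. The real task is therefore to show that $\mathrm{Ker}\,\varphi$ is \emph{exactly} this induced copy of $S_{k-1}$, i.e.\ that an automorphism fixing $x$ and $N(x)$ pointwise cannot act on the remaining vertices in any non-induced way. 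The natural plan is a fixed-point propagation through a partition of $V(\Gamma)$ indexed by the intersection pattern of each $k$-set with a fixed reference set (generalizing the classes $B,O,R,G$ of the $k=2$ proof), identifying at each step already-fixed vertices whose common neighborhood pins down the image of a target vertex up to the inner-leaf $S_{k-1}$ ambiguity; an induction on $n$ via the deletion Proposition~\ref{pborrado} (deleting a leaf turns $F_k(K_{1,n})$ into the induced subgraph $F_k(K_{1,n-1})$) should organize this. The $|N(v)\cap N(w)|\le 2$ estimate that closed the $k=2$ case must be replaced by a sharper control of common neighborhoods of $k$-subsets, and verifying that these stay small enough to determine images uniquely (modulo the unavoidable inner symmetry) is the technical heart.

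Finally, for the middle case $k=n/2$ the map $f_c(A)=V(K_{1,n})\setminus A$ is, by the theorem establishing the complementation involution, a fixed-point-free automorphism of $\Gamma$ that is not induced, and it swaps $\mathcal B$ with $\mathcal R$ (which now have equal degree and equal size). The plan here is to show that every automorphism either preserves $\{\mathcal B,\mathcal R\}$ — and is then induced by the analysis above — or interchanges them, in which case composing with $f_c$ returns to the part-preserving case; together with the observation that $f_c$ commutes with every induced automorphism, this identifies $\aut(\Gamma)$ with $\langle\aut(K_{1,n}),f_c\rangle\simeq S_2\times\aut(K_{1,n})$, matching the lower bound $|\aut(F_{n/2}(\Gamma))|\ge 2|\aut(\Gamma)|$ from the corollary.
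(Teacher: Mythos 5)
First, note that the statement you are proving is presented in the paper only as a conjecture ``based on experimental results'': the paper contains no proof of it, so there is nothing to compare your argument against, and your write-up must stand or fall on its own. Your framing is sound as far as it goes: the bipartition $\{\mathcal B,\mathcal R\}$ with degrees $n-k+1$ and $k$, the consequent invariance of the parts when the degrees differ, the independence of $N(x)$ giving $\aut(\Gamma\langle N(x)\rangle)=S_{n-k+1}$, the identity $n!=\binom{n}{k-1}(k-1)!\,(n-k+1)!$, and the observation that the induced copy of $S_{k-1}$ permuting the leaves of $x\setminus\{c\}$ already lies in $\mathrm{Ker}\,\varphi$ (so that the $k=2$ conclusion $\mathrm{Ker}\,\varphi=\{\mathrm{id}\}$ cannot hold for $k\ge 3$) are all correct and correctly reduce the generic case to the single inequality $|\mathrm{Ker}\,\varphi|\le(k-1)!$. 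But that inequality is exactly where you stop: the ``fixed-point propagation'' through intersection patterns, the replacement for the $|N(v)\cap N(w)|\le 2$ estimate, and the induction via Proposition~\ref{pborrado} are announced rather than carried out. Since you yourself identify this as ``the technical heart,'' what you have is a reduction of the conjecture to a sharper open claim, not a proof; the statement remains unproved after your argument.

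Two further points need attention. First, there is an index mismatch you should not gloss over: $K_{1,n}$ has $n+1$ vertices, so the two degree values $k$ and $n-k+1$ coincide, and the complementation involution $f_c$ of the paper's theorem exists, precisely when $k=(n+1)/2$, \emph{not} when $k=n/2$ as the conjecture literally states. If one reads $k=n/2$ at face value (forcing $n$ even, hence $|V(K_{1,n})|$ odd), there is no complementation map and the parts still have distinct degrees, so your own generic argument would predict $\aut(F_{n/2}(K_{1,n}))=S_n$ rather than $S_2\times S_n$; your sketch of the exceptional case silently substitutes $(n+1)/2$ for $n/2$. You should either flag this as a likely off-by-one in the conjecture's statement or reconcile it explicitly. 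Second, even in the exceptional case your reduction ``compose with $f_c$ to return to the part-preserving case'' again leans on the unproved generic analysis, so it inherits the same gap.
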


\subsection*{Fan graphs}
The fan graph $A_{1, n}$ is the joint graph $K_1+P_{n}$. The vertices of $A_{1, n}$ are the disjoint union $V(K_1) \cup V(P_{n})$, where $V(K_1)=\{v\}$ and $V(P_{n})=\{u_1, \dots, u_n\}$, where $u_i\sim u_{i+1}$ in $P_n$, for every $1\leq i \leq n-1$. In Figure~\ref{fig4} we show $F_2(A_{1, 7})$. 

It is well-known that $\aut(A_{1, n}) \simeq S_2$, for $n \neq 3$. For $n \geq 5$, it can be shown (by Observation~\ref{grado}) that  the degrees of vertices in $F_2(A_{1, n-1})$ are as follows:
\begin{itemize}
\item $d(\{u_1, u_2\})=3, d(\{u_1, u_i\})=5$, for $3\leq i \leq n-1$,  $d(\{u_1, v\})=n$, $d(\{u_1, u_n\})=4$; 
\item $d(\{u_i, u_j\})\in \{4, 6\}$, for $i, j \in \{2, \dots, n-1\}, i \neq j$;
\item $d(\{u_i, u_n\})=5$, for $2\leq i \leq n-2$, $d(\{u_{n-1}, u_{n}\})=3, d(\{u_n, v\})=n$;
\item $d(u_i, v)=n+1$, for every $2\leq i \leq n-1$. 
\end{itemize}

\begin{figure}
\begin{center}
\includegraphics[scale=0.7]{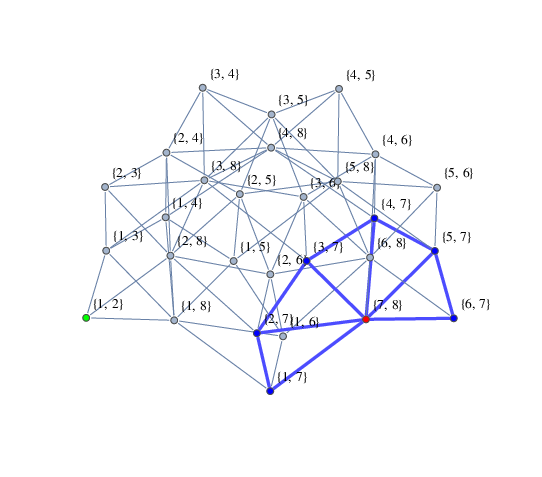}
\caption{{\small The $2$-token graph of $A_{1, 7}$, where $V(A_{1, 7})=\{1, \dots, 8\}$ and $d(8)=7$. In the proof of Theorem~\ref{teo-2fan}, $x=\{1, 2\}$, and $R$ is equal to the set of red and blue vertices.}}
\label{fig4}
\end{center}
\end{figure}

Therefore, if $w \in V(F_2(A_{1, n}))$, then $d(w) \in \{3, 4, 5, 6, n, n+1\}$. Note that for $n\geq 8$, there exists exactly two vertices of degree $3$ and exactly two vertices of degree $n$ in $F_2(A_{1, n})$. We obtained that $\aut(A_{1, 3})\simeq S_2 \times S_2$ by using Mathematica software. 

\begin{theorem}\label{teo-2fan}
Let $n \geq 4$ be an integer. Then $\aut(F_2(A_{1, n}))=\aut(A_{1, n})$.  
\end{theorem}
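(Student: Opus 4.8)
The plan is to follow the orbit--stabilizer strategy already used for cycles and stars. Write $\Gamma=F_2(A_{1,n})$. Since $\aut(A_{1,n})\simeq S_2$ for $n\geq 4$, Theorem~\ref{aut-sub} gives $S_2\simeq\aut(A_{1,n})\leq\aut(\Gamma)$, so it suffices to prove $|\aut(\Gamma)|\leq 2$. I would take the vertex $x=\{u_1,u_2\}$, which has degree $3$ by Observation~\ref{grado}, and bound $|O(x)|$ and $|{\rm Stab}(x)|$ separately; by the orbit--stabilizer theorem $|\aut(\Gamma)|=|O(x)|\,|{\rm Stab}(x)|$. It is convenient to split $V(\Gamma)$ into the \emph{central} vertices $\mathcal{C}=\{\{u_i,v\}\colon 1\leq i\leq n\}$ and the \emph{path} vertices $\mathcal{P}=\{\{u_i,u_j\}\colon 1\leq i<j\leq n\}$, the latter inducing $F_2(P_n)$ by Proposition~\ref{pborrado}.

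For the orbit, note that for $n\geq 7$ the only vertices of degree $3$ are $\{u_1,u_2\}$ and $\{u_{n-1},u_n\}$, so $|O(x)|\leq 2$. For the stabilizer I would first compute $N(x)=\{\{u_1,v\},\{u_2,v\},\{u_1,u_3\}\}$, whose three elements have pairwise distinct degrees $n$, $n+1$ and $5$ in $\Gamma$ (here $n\geq 6$ is used). Since any $f\in{\rm Stab}(x)$ permutes $N(x)$ while preserving degrees in $\Gamma$, it must fix each of these three neighbors. This replaces the kernel-of-restriction computation of the previous sections by a direct degree argument.

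Next I would propagate. For $n\geq 7$ the central vertices are exactly the vertices of degree greater than $6$, so $f(\mathcal{C})=\mathcal{C}$ and hence $f(\mathcal{P})=\mathcal{P}$; moreover $\{u_i,v\}\sim\{u_j,v\}$ in $\Gamma$ iff $|i-j|=1$, so $\Gamma\langle\mathcal{C}\rangle$ is a path $\{u_1,v\}-\{u_2,v\}-\cdots-\{u_n,v\}$. As $f$ fixes the endpoint $\{u_1,v\}$ together with its neighbor $\{u_2,v\}$, the only surviving path automorphism is the identity, so $f$ fixes all of $\mathcal{C}$. Finally, for each $i\neq j$ the vertex $\{u_i,u_j\}$ is the unique vertex of $\mathcal{P}$ adjacent to both $\{u_i,v\}$ and $\{u_j,v\}$ (a short neighborhood computation from $N(\{u_i,v\})=\{\{u_{i\pm 1},v\}\}\cup\{\{u_i,u_k\}\colon k\neq i\}$); since $f$ fixes all of $\mathcal{C}$ and preserves $\mathcal{P}$, it fixes every $\{u_i,u_j\}$. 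Therefore ${\rm Stab}(x)=\{id\}$, giving $|\aut(\Gamma)|\leq 2$, as wanted.

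The main obstacle is the small cases $n\in\{4,5,6\}$, where the degree separations collapse: for $n\in\{4,5\}$ two of the three neighbors of $x$ share the degree $5$, and for $n\in\{5,6\}$ the values $5$ and $6$ occur both in $\mathcal{C}$ and in $\mathcal{P}$, so neither ``fix the three neighbors by distinct degrees'' nor ``$f(\mathcal{C})=\mathcal{C}$'' is immediate. These finitely many cases I would settle separately, either by a finer structural analysis (isolating the degree-$(n+1)$ internal central vertices first and rebuilding the path $\Gamma\langle\mathcal{C}\rangle$ from them) or by direct computer verification, consistently with how $\aut(A_{1,3})\simeq S_2\times S_2$ is recorded above.
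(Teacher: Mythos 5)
Your argument is correct, but it takes a genuinely different route from the paper. The paper proves the bound $|{\rm Stab}(x)|=1$ by induction on $n$: it peels off the set $R$ of vertices containing $u_n$ (which induces a copy of $A_{1,n-1}$ with apex $\{v,u_n\}$), observes that the complement induces $F_2(A_{1,n-1})$, and invokes the induction hypothesis on that complement; this forces computer verification of all base cases $n\in\{4,\dots,8\}$. You instead avoid induction entirely via the decomposition into central vertices $\mathcal{C}$ (those containing $v$) and path vertices $\mathcal{P}$: for $n\geq 7$ the degree gap ($\deg\geq n>6$ on $\mathcal{C}$ versus $\deg\leq 6$ on $\mathcal{P}$) makes $\mathcal{C}$ invariant, $\Gamma\langle\mathcal{C}\rangle$ is a path whose endpoint $\{u_1,v\}$ and its neighbor are already fixed by the degree argument on $N(x)$, so $\mathcal{C}$ is fixed pointwise, and then each $\{u_i,u_j\}$ is pinned down as the unique vertex of $\mathcal{P}$ adjacent to both $\{u_i,v\}$ and $\{u_j,v\}$ (correct: a vertex of $\mathcal{P}$ adjacent to $\{u_i,v\}$ must contain $u_i$). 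All the degree computations you rely on match the list given before the theorem, and your accounting of where the small cases break ($n\in\{4,5\}$ for the distinct-degrees-on-$N(x)$ step, $n\in\{5,6\}$ for the invariance of $\mathcal{C}$) is accurate. The trade-off: your proof is self-contained, uses no induction, and only needs $n\in\{4,5,6\}$ checked separately, whereas the paper's inductive scheme needs base cases up to $n=8$ but reuses the recursive structure $F_2(A_{1,n})\supset F_2(A_{1,n-1})$ that is in the spirit of the rest of the paper (e.g., the proof of Theorem~\ref{auto-path}). Either way the conclusion $|\aut(F_2(A_{1,n}))|\leq 2=|\aut(A_{1,n})|$ follows from Theorem~\ref{aut-sub} and the orbit--stabilizer theorem.
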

\begin{proof} 
The proof is by induction on $n$. The cases for $n \in \{4, \dots, 8\}$ were obtained by computer. In the rest of the proof $n \geq 9$. Assume as induction hypothesis that the result is true for every $4\leq m <n$. 

 Let $\Gamma=F_2(A_{1, n})$. By Theorem~\ref{aut-sub}, $\aut(A_{1, n}) \leq \aut(\Gamma)$ and hence it is enough to prove that $|\aut(\Gamma)|\leq 2$. Let
  \[
 R=\left\{\{w, u_n\} \in V(\Gamma) \colon w \in V(A_{1, n}), w \neq u_n\right\}\]
  and $\Gamma_R=\Gamma \langle R \rangle$. We have that 
  \[
  R= N\left(\{v, u_n\}\right) \cup \{\{v, u_n\}\} \setminus \{\{v, u_{n-1}\}\}.
  \]
  Note that $\Gamma_R$ is isomorphic to $A_{1, n-1}$, where $\{v, u_n\}$ is the vertex of degree $n-1$ in $\Gamma_R$. Let  $\Gamma_{\overline{R}}=\Gamma-R$. Note that $\Gamma_{\overline{R}} = F_2(A_{1, n-1})$, where the vertices of $A_{1, n-1}$ are given by $V(K_1)=\{v\}$ and $V(P_{n-1})=\{u_1, \dots, u_{n-1}\}$. 

 Let $x \in V(\Gamma)$ be the vertex $\{u_1, u_2\}$. We have that $|{\rm Orb}(x)|\leq 2$ because $d(x)=3$ and there are exactly two vertices of degree $3$ in $\Gamma$. 

 Let $\Gamma_1=\Gamma\langle N(x)\rangle$, where $N(x)=\{\{u_1, u_3\},  \{v, u_1\}, \{v, u_2\}\}$. Since the graph $\Gamma_1$ is isomorphic to $P_3$, then ${\rm Aut}(\Gamma_1)=S_2$. Let $\varphi \colon {\rm Stab}(x) \to \aut(\Gamma_1)$ be the homomorphism given by $f \mapsto f|_{N(x)}$. Since the vertices $\{u_1, u_3\}$ and $\{v, u_2\}$ have different degree in $\Gamma$, then $f|_{N(x)}=id$, for every $f \in {\rm Stab}(x)$. Therefore ${\rm Im}(\varphi)=\{id\}$ and hence $|{\rm Stab}(x)|=|{\rm Ker}~\varphi|$. We will prove that ${\rm Ker}~\varphi=\{id\}$. 

Let $f \in {\rm Ker}~\varphi$. As $f \in {\rm Stab}(x)$ and $f|_{N(x)}=id$, we have that 
\[
\left\{\{u_1, u_2\}, \{u_1, u_3\}, \{v, u_1\}, \{v, u_2\} \right\} \subseteq {\rm Fix}(f).
\] 
The unique vertices in $\Gamma$ of degree $n$ are $\{v, u_1\}$ and $\{v, u_n\}$. Therefore $f(\{v, u_n\})=\{v, u_n\}$ and this implies that $f(N(\{v, u_n\}))=N(\{v, u_n\})$. Note that $\{v, u_{n-1}\}$ is the unique vertex of degree $n+1$ in $N(\{v, u_n\})$ and hence $f(\{v, u_{n-1}\})=\{v, u_{n-1}\}$. Then $f(R)=R$ and hence $f|_{R} \in \aut(\Gamma_R)$   (remember that $R=N\left(\{v, u_n\}\right) \cup \{\{v, u_n\}\} \setminus \{\{v, u_{n-1}\}\}$). This implies that $f|_{V(\Gamma_{\overline{R}})} \in \aut(\Gamma_{\overline{R}})$. 

Since $\aut(\Gamma_R)$ is isomorphic to $\aut(A_{1, n-1})$, the image of $\{u_1, u_n\}$ under $f|_{R}$ has only two possibilities: $\{u_1, u_n\}$ or $\{u_{n-1}, u_n\}$. But $f(\{u_1, u_n\})=\{u_{n-1}, u_n\}$ is imposible because $\{u_1, u_n\}$ and $\{u_{n-1}, u_n\}$ have different degrees in $\Gamma$. Thus $f|_{R}=id$. Now $\aut(\Gamma_{\overline{R}})=\aut(F_2(A_{1, n-1}))$ and by induction we have that either $f|_{V(\Gamma_{\overline{R}})}=id$ or $f|_{V(\Gamma_{\overline{R}})}=g$, where $g$ is the automorphism in $\aut(F_2(A_{1, n-1}))$ that moves the vertex $\{u_1, u_2\}$. But $f(\{u_1, u_2\})=\{u_1, u_2\}$ and hence $f|_{V(\Gamma_{\overline{R}})}=id$. Therefore $f=id$. In this way $|{\rm Stab}(x)|=1$ and hence $|\aut(\Gamma)| \leq 2$ as desired.

\end{proof}
\begin{conjecture}
Let $n$ be an integer and $3\leq k \leq n/2$. If $k\neq n/2$ then $\aut(F_k(A_{1, n}))=\aut(A_{1, n})$ and if $k=n/2$, then $\aut(F_k(A_{1, n}))=S_2 \times \aut(A_{1, n})$. 
\end{conjecture}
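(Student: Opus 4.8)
The plan is to establish the matching upper bounds, since Theorem~\ref{aut-sub} already gives $\aut(A_{1,n}) \le \aut(F_k(A_{1,n}))$ and, in the boundary case $k = n/2$ (where $A_{1,n}$ has even order $2k$), the complement involution $f_c \colon A \mapsto V(A_{1,n}) \setminus A$ supplies an extra, non-induced factor of $S_2$. Write $\Gamma = F_k(A_{1,n})$ and partition $V(\Gamma)$ into the two layers $\mathcal{O} = \{A : v \notin A\}$ and $\mathcal{I} = \{A : v \in A\}$, where $v$ is the hub of the fan. By Proposition~\ref{pborrado} the subgraph induced by $\mathcal{O}$ is $F_k(A_{1,n} - v) = F_k(P_n)$; and the subgraph induced by $\mathcal{I}$ is isomorphic to $F_{k-1}(P_n)$, because among vertices that all contain $v$ the only legal moves slide a token along a path edge. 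The cross edges join each $A \in \mathcal{O}$ to the $k$ vertices $(A \setminus \{u_i\}) \cup \{v\} \in \mathcal{I}$, $u_i \in A$. Since $3 \le k \le n/2$ gives $k - 1 \ge 2$, $n \ne 2(k-1)$, and (in the generic case $k < n/2$) $n \ne 2k$, Theorem~\ref{auto-path} yields $\aut(F_k(P_n)) = \aut(F_{k-1}(P_n)) = \aut(P_n) \simeq S_2$, generated by the path reversal $\rho$.

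The engine of the proof is the claim that every $f \in \aut(\Gamma)$ preserves the partition $\{\mathcal{O}, \mathcal{I}\}$. Granting this, $f$ restricts to an automorphism of each induced layer, so $f|_{\mathcal{O}}$ is induced by some $\sigma \in \{\mathrm{id}, \rho\}$ and $f|_{\mathcal{I}}$ by some $\tau \in \{\mathrm{id}, \rho\}$, both acting on the path $u_1, \dots, u_n$. Preservation of the cross edges then forces $\sigma = \tau$: from $A \in \mathcal{O}$ and $u_i \in A$ one gets $\sigma(A) = \tau(A \setminus \{u_i\}) \cup \{u_j\}$ for some $j$, and letting $A$ range over all $k$-subsets shows this can hold simultaneously only when $\sigma = \tau$. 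Hence $f$ is induced by $\sigma \in \aut(A_{1,n})$, which gives $|\aut(\Gamma)| \le 2$ and the generic case of the statement. In the boundary case $k = n/2$ one argues identically for the automorphisms fixing the partition, while those that interchange $\mathcal{O}$ and $\mathcal{I}$ become partition-preserving after composition with $f_c$; this yields $\aut(\Gamma) = S_2 \times \aut(A_{1,n})$.

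The crux, and the step I expect to be the real obstacle, is proving the partition $\{\mathcal{O}, \mathcal{I}\}$ invariant. By Observation~\ref{grado} a vertex $A \in \mathcal{O}$ has degree $\beta(A) + k$ and a vertex $\{v\} \cup S \in \mathcal{I}$ has degree $\beta(S) + (n - k + 1)$, where $\beta(\cdot)$ counts the path edges on the boundary of the token set (so $\beta \le 2k$). When $n$ is large compared with $k$ (concretely $n \gtrsim 4k$) these two degree ranges are disjoint, the layers are recognized by degree alone, and the argument above closes immediately. The difficulty is the band $2k \le n \lesssim 4k$, where the degree spectra overlap and a token sitting on the hub cannot be detected locally. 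For this range I would induct on $n$, taking the small cases by computer as in the proof of Theorem~\ref{teo-2fan}, and using Proposition~\ref{pborrado} to realize $F_k(A_{1,n-1})$ and $F_{k-1}(A_{1,n-1})$ as the induced subgraphs obtained by deleting the path endpoints $u_n$ and $u_1$; the inductive hypothesis identifies the hub inside these subgraphs, and one then propagates the identification of the $v$-containing vertices to all of $\Gamma$. Making this propagation airtight — in particular excluding an exotic automorphism that shuffles low-degree vertices between the two layers inside the overlap band — is the delicate point, and is presumably why the statement remains a conjecture.
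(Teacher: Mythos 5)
First, a point of calibration: the statement you are proving is left as a \emph{conjecture} in the paper, supported only by computer experiments, so there is no proof of record to compare against; your attempt has to stand on its own. As an outline it is sensible --- the two-layer decomposition $\mathcal{O}=\{A : v\notin A\}\simeq F_k(P_n)$, $\mathcal{I}=\{A : v\in A\}\simeq F_{k-1}(P_n)$, with Theorem~\ref{auto-path} controlling each layer and the cross edges forcing the two induced path-automorphisms to agree, is exactly the natural attack. But the argument is not a proof, for the reason you yourself flag: everything hinges on the invariance of the partition $\{\mathcal{O},\mathcal{I}\}$ under an arbitrary automorphism, and your degree computation ($d(A)=\beta(A)+k\in[k+1,3k]$ on $\mathcal{O}$ versus $d(\{v\}\cup S)=\beta(S)+n-k+1\in[n-k+2,n+k-1]$ on $\mathcal{I}$) only separates the layers when $n\geq 4k-1$. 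On the entire band $2k\leq n\leq 4k-2$ the degree spectra overlap heavily, and the proposed rescue by induction on $n$ is circular in its current form: to restrict an automorphism of $F_k(A_{1,n})$ to the copy of $F_k(A_{1,n-1})$ obtained by deleting $u_n$, you must first show the automorphism preserves the set of vertices containing $u_n$, which is a recognition problem of the same kind as the one you are trying to solve. No mechanism is offered for that, so the central step remains open.

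There is also a concrete error in the exceptional case. You attribute the extra $S_2$ at $k=n/2$ to the complement involution $f_c$, ``where $A_{1,n}$ has even order $2k$.'' But $A_{1,n}=K_1+P_n$ has order $n+1=2k+1$, which is odd, so $f_c$ sends $k$-sets to $(k+1)$-sets and is not an automorphism of $F_{n/2}(A_{1,n})$ at all; whatever produces the conjectured extra factor, it is not $f_c$. Moreover, in that same case your layer analysis breaks down independently: $f|_{\mathcal{O}}$ lies in $\aut(F_{n/2}(P_n))$, which Theorem~\ref{auto-path} does not cover and which the paper only \emph{conjectures} to be $S_2\times S_2$, so you cannot conclude that $f|_{\mathcal{O}}$ is induced by a path automorphism. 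In short: the generic-case skeleton is plausible but has an unfilled gap at its load-bearing step, and the boundary case as written rests on a false premise plus an unproven auxiliary conjecture.
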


\subsection*{Wheel graphs}

The wheel graph $W_{1, n}$, $n\geq 3$, is defined as the join graph $K_1+C_n$. In Figure~\ref{fig5} we show $F_2(W_{1, 7})$. It is well-known that $\aut(W_{1, n})\simeq D_{2n}$. We obtained that $\aut(W_{1, 3})\simeq S_2\times D_{6}$ by using Mathematica software. 

\begin{figure}
\begin{center}
\includegraphics[scale=0.7]{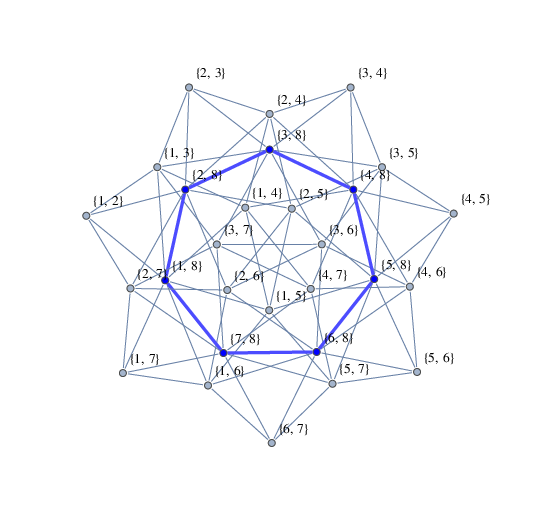}
\caption{{\small The $2$-token graph of $W_{1, 7}$, where $V(W_{1, 7})=\{1, \dots, 8\}$ and $d(8)=7$. In the proof of Theorem~\ref{teo-2rueda}, $\Gamma_C$ is the blue subgraph.}}
\label{fig5}
\end{center}
\end{figure}

\begin{theorem}\label{teo-2rueda} Let $n \geq 4$ be integer. Then $\aut(F_2(W_{1, n}))=\aut(W_{1, n})$.
\end{theorem}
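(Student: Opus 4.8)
The plan is to mimic the strategy used for the cycle and fan graphs: since $\aut(W_{1,n})\cong D_{2n}$, Theorem~\ref{aut-sub} already gives an embedding $D_{2n}\hookrightarrow\aut(F_2(W_{1,n}))$, so it suffices to prove $|\aut(F_2(W_{1,n}))|\le 2n$. Writing $V(W_{1,n})=\{v\}\cup\{u_1,\dots,u_n\}$ with $v$ the hub and $u_1,\dots,u_n$ the rim cycle, I would first record the degrees in $\Gamma=F_2(W_{1,n})$ via Observation~\ref{grado}: since $d(v)=n$ and $d(u_i)=3$, a spoke vertex $\{v,u_i\}$ has degree $n+1$, a rim edge $\{u_i,u_{i\pm 1}\}$ has degree $4$, and a rim non-edge $\{u_i,u_j\}$ has degree $6$. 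I would dispatch the small cases $n\in\{4,5\}$ by computer (as is done for the fan graph) and assume $n\ge 6$ in what follows, so that $n+1>6$.

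The key structural observation is that the $n$ spoke vertices $S=\{\{v,u_i\}\colon 1\le i\le n\}$ are exactly the vertices of degree $n+1$ once $n\ge 6$, hence $S$ and its complement are invariant under every $f\in\aut(\Gamma)$. By Proposition~\ref{pborrado} applied with $X=\{v\}$, the subgraph $\Gamma_C$ induced on $V(\Gamma)\setminus S$ (the rim pairs) is isomorphic to $F_2(C_n)$. Therefore every $f\in\aut(\Gamma)$ restricts to an automorphism $f|_{\Gamma_C}\in\aut(F_2(C_n))$, and I would invoke the result of Section~\ref{sec2}, valid since $n\ge 6\neq 4$, that $\aut(F_2(C_n))=\aut(C_n)\cong D_{2n}$. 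Thus $f|_{\Gamma_C}$ is the automorphism $f_\tau$ induced by some $\tau\in\aut(C_n)$.

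It then remains to show that $\tau$ determines $f$ on the spokes as well, so that $f$ is the automorphism of $F_2(W_{1,n})$ induced by the extension $\tilde\tau\in\aut(W_{1,n})$ fixing $v$. Here I would use that a spoke $\{v,u_i\}$ has as its neighbors inside $\Gamma_C$ precisely the rim pairs containing $u_i$, namely $\{\{u_i,u_j\}\colon j\neq i\}$, whose common element recovers $u_i$ (using $n\ge 3$). Since $f$ preserves $S$ and carries these neighbors, through $f_\tau$, to the rim pairs containing $\tau(u_i)$, the image $f(\{v,u_i\})$ must be the spoke whose $\Gamma_C$-neighborhood is exactly that set, i.e.\ $f(\{v,u_i\})=\{v,u_{\tau(i)}\}$. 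Combined with $f|_{\Gamma_C}=f_\tau$ this forces $f=f_{\tilde\tau}$, giving $|\aut(\Gamma)|\le|\aut(C_n)|=2n$ and hence equality with $\aut(W_{1,n})$.

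The main obstacle is the separation of the spokes from the rim pairs by the degree sequence: this clean separation holds only for $n\ge 6$, and at $n=5$ the spokes and the rim non-edges both have degree $6$, so those cases must be verified directly. A secondary point to check carefully is that the whole reduction lies within the range $n\neq 4$ demanded by the Section~\ref{sec2} theorem on which it rests; since the structural part already requires $n\ge 6$, this causes no conflict.
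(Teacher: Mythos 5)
Your proposal is correct, and it shares the paper's skeleton: both arguments separate the spokes from the rim pairs using the degree sequence (which works only for $n\ge 6$, with $n\in\{4,5\}$ left to the computer), identify the induced subgraph on the rim pairs with $F_2(C_n)$, and apply the Section~\ref{sec2} theorem to conclude that the restriction of any $f\in\aut(F_2(W_{1,n}))$ to the rim pairs is induced by some $\tau\in\aut(C_n)$. The difference lies in how the action on the spokes is then determined (note also that your $\Gamma_C$ is the paper's $\Gamma_T$; the paper reserves $\Gamma_C$ for the spokes). The paper additionally restricts $f$ to the induced $n$-cycle on the spokes to obtain a second automorphism $\beta\in\aut(C_n)$, and then proves a claim showing that if $\alpha\ne\beta$ then $\alpha\beta^{-1}$ would be a transposition, contradicting the structure of $D_{2n}$ for $n>3$. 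You bypass that step entirely: the rim-pair neighbours of the spoke $\{v,u_i\}$ are precisely the pairs containing $u_i$, that family determines $u_i$ as its unique common element (since $n-1\ge 2$), and $f$ carries it to the family of pairs containing $\tau(u_i)$, which forces $f(\{v,u_i\})=\{v,\tau(u_i)\}$. Your finish is shorter and more elementary, requiring no group-theoretic fact about $D_{2n}$ and no separate analysis of the spoke cycle, while the paper's version makes explicit the reconciliation of the two a priori independent cycle automorphisms. Either way one concludes that every automorphism of $F_2(W_{1,n})$ is induced by an automorphism of $W_{1,n}$, so your argument is complete.
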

\begin{proof} 
In this proof, the vertex set of $W_{1, n}$ is $\{v, u_1, \dots, u_n\}$, where $V(K_1)=\{v\}$ and $V(C_n)=\{u_1, \dots, u_n\}$, with $u_1\sim u_n$ and $u_i\sim u_{i+1}$, $1\leq i \leq n$ in $C_n$. The cases $n \in \{4, 5\}$ were verified by computer and hence we suppose that $n \geq 6$. Let $\Gamma=F_2(W_{1, n})$. Let $T=\{\{u_i, u_j\} \in V(\Gamma) \colon 1\leq i < j \leq n\}$ and $C=\{\{u_i, v\} \in V(\Gamma)  \colon 1 \leq i \leq n\}$. Let $\Gamma_T=\Gamma\langle T \rangle$ and $\Gamma_C=\Gamma\langle C \rangle$. We have that $\Gamma_T = F_2(C_n)$ and $\Gamma_C\simeq C_n$ (see Figure~\ref{fig5} for an example), where an isomorphism between $C_n$ and $\Gamma_C$ is given by $u_i \mapsto (u_i, v)$. In this proof we use that every automorphism in $\aut(\Gamma_T)$ (that is equal to $\aut(F_2(C_n))$) is induced by some automorphism in $\aut(C_n)$. For the case of $\aut(\Gamma_C)$ we have that every automorphism $\theta \in \aut(C_n)$ induces an automorphism $g$ in $\aut(\Gamma_C)$ given by $g(\{u_i, v\})=(\{\theta(u_i), v\})$. We know that $\aut(W_{1, n}) \leq \aut(F_2(W_{1, n}) )$ and we will prove that $\aut(F_2(W_{1, n}) )=\aut(W_{1, n})$ by showing that every automorphism $f$ in  $\aut(F_2(W_{1, n}))$ is induced by some automorphism $\theta$ in $\aut(W_{1, n})$, i.e., $f(\{a, b\})=\{\theta(a), \theta(b)\}$, for every $\{a, b\} \in V(F_2(W_{1, n}))$. 

Let $y \in V(F_2(W_{1, n}))$. Note that if $y \in T$, then $d(y) \in \{4, 6\}$, and if $ y\in C$, then $d(y)=n+1$. As $n \geq 6$, then $d(y) \not \in \{4, 6\}$ when $y \in C$. Therefore, we have that $f|_T\in \aut(\Gamma_T)$ and $f|_C \in \aut(\Gamma_C)$, for every $f \in \aut(F_2(W_{1, n}))$. As $f|_T \in \aut(F_2(C_n))$, then there exists $\alpha \in \aut(C_n)$ such that $f|_T(\{a, b\})=\{\alpha(a), \alpha(b)\}$, for any $\{a, b\} \in V(\Gamma_T)$. For the case of $f|_C$ we have that $f|_C(\{a, v\})=\{\beta(a), v\}$, for some $\beta \in \aut(C_n)$. 

We will prove by contradiction that $\alpha=\beta$. Suppose that $\alpha\neq \beta$. Without loss of generality, we suppose that $\alpha(u_1)\neq \beta(u_1)$. 
\begin{claim}
 If $\alpha(u_1)\neq \beta(u_1)$, then $\alpha(u_1)=\beta(u_2), \alpha(u_2)=\beta(u_1)$ and $\alpha(u_j)=\beta(u_j)$, for every $j \in \{3, \dots, n\}$.
 \end{claim}
 \begin{proof} 
As $\{u_1, u_2\} \sim \{u_1, v\}$, then $f(\{u_1, u_2\}) \sim f(\{u_1, v\})$, that is $\{\alpha(u_1), \alpha(u_2)\}\sim \{\beta(u_1), v\}$. By the definition of $2$-token graph $|\{\alpha(u_1), \alpha(u_2)\}\cap \{\beta(u_1), v\}|=1$. But $v \not \in \{\alpha(u_1), \alpha(u_2)\}$ and $\alpha(u_1) \neq \beta (u_1)$, and then we have that $\alpha(u_2)=\beta(u_1)$. On the other hand $\{u_1, u_2\} \sim \{u_2, v\}$ and then $f(\{u_1, u_2\}) \sim f(\{u_2, v\})$. That is $\{\alpha(u_1), \alpha(u_2)\}\sim \{\beta(u_2), v\}$. But we have proved that $\alpha(u_2)=\beta(u_1)$ and hence  $\{\alpha(u_1), \beta(u_1)\} \sim \{\beta(u_2), v\}$. Similarly as in previous case the equality $|\{\alpha(u_1), \beta(u_1)\}\cap \{\beta(u_2), v\}|=1$ implies that $\alpha(u_1)=\beta(u_2)$. 

 Now, for $j \in \{3, \dots, n\}$, we have that $\{u_1, u_j\}\sim \{u_j, v\}$. Then 
 $f(\{u_1, u_j\})\sim f(\{u_j, v\})$, that is $\{\alpha(u_1), \alpha(u_j)\}\sim \{\beta(u_j), v\}$. But $\alpha(u_1)=\beta(u_2)$ and hence $\{\beta(u_2), \alpha(u_j)\}\sim \{\beta(u_j), v\}$. 
 The unique option is that $\alpha(u_j)=\beta(u_j)$ and the proof of the claim is completed.
\end{proof}
 Let $a=\beta(u_1)$ and $b=\beta(u_2)$. Using previous claim, it is easy to check that $\alpha \beta^{-1}(a)=b$, $\alpha \beta^{-1}(b)=a$, and $\alpha \beta^{-1}(c)=c$, for every $c \in V(C_n) \setminus \{a, b\}$. That is, $\alpha \beta^{-1}$ is equal to the transposition $(a, b)$ (written in cyclic notation). 
 But $\alpha \beta^{-1} \in \aut(C_n)=D_{2n}$ and the dihedral group $D_{2n}$ has not transpositions when $n >3$. 
 This contradiction shows that $\alpha=\beta$. Therefore, $f$ is the automorphism induced by $\theta \in \aut(W_{1, n})$, where $\theta(v)=v$ and $\theta(u_i)=\alpha(u_i)$, for every $i \in \{1, \dots, n\}$.

 \end{proof}
 \begin{conjecture}
Let $n$ be an integer and $3\leq k \leq n/2$. If $k\neq n/2$ then $\aut(F_k(W_{1, n}))=\aut(W_{1, n})$ and if $k=n/2$, then $\aut(F_k(W_{1, n}))=S_2 \times \aut(W_{1, n})$. 
\end{conjecture}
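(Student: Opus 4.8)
\emph{Overall strategy.} The plan is to split $V(\Gamma)$, where $\Gamma=F_2(W_{1,n})$, according to whether a pair contains the hub $v$, to reduce each part to a graph whose automorphisms we already control, and then to force the two pieces to be governed by one and the same cycle automorphism. Concretely, I would set $T=\{\{u_i,u_j\}\colon 1\le i<j\le n\}$ and $C=\{\{u_i,v\}\colon 1\le i\le n\}$. The induced subgraph $\Gamma_T=\Gamma\langle T\rangle$ is exactly $F_2(C_n)$, since among rim-only pairs the hub is irrelevant, while $\Gamma_C=\Gamma\langle C\rangle\simeq C_n$ because $\{u_i,v\}$ and $\{u_j,v\}$ are adjacent in $\Gamma$ exactly when $u_i\sim u_j$. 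By Theorem~\ref{aut-sub} it is enough to prove that every $f\in\aut(\Gamma)$ is induced by some $\theta\in\aut(W_{1,n})$.

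\emph{Separating the two parts.} Next I would compute degrees via Observation~\ref{grado}. Every rim vertex has degree $3$ and the hub has degree $n$, so a pair $\{u_i,u_j\}\in T$ has degree $4$ if $u_i\sim u_j$ and $6$ otherwise, whereas $\{u_i,v\}\in C$ has degree $3+n-2=n+1$. For $n\ge 6$ we have $n+1\ge 7>6$, so $f$ must preserve $T$ and $C$ setwise; hence $f|_T\in\aut(F_2(C_n))$ and $f|_C\in\aut(\Gamma_C)$. The cases $n\in\{4,5\}$, where this degree gap closes or where the $F_2(C_n)$ result is unavailable, I would dispose of by direct computation. Invoking the previous section's theorem that $\aut(F_2(C_n))=\aut(C_n)$ for $n\ne4$, there is an $\alpha\in\aut(C_n)$ with $f(\{u_i,u_j\})=\{\alpha(u_i),\alpha(u_j)\}$ on $T$; transporting $f|_C$ through $\Gamma_C\simeq C_n$ gives a $\beta\in\aut(C_n)$ with $f(\{u_i,v\})=\{\beta(u_i),v\}$. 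The whole theorem then reduces to the single equality $\alpha=\beta$, for in that case $f$ is induced by the $\theta\in\aut(W_{1,n})$ fixing $v$ and acting as $\alpha$ on the rim.

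\emph{The main obstacle: proving $\alpha=\beta$.} This is where the real work lies, and I would argue by contradiction using the cross-edges between $T$ and $C$. The crucial incidence is that $\{u_i,u_j\}$ is adjacent to precisely the two $C$-vertices $\{u_i,v\}$ and $\{u_j,v\}$, since any $\{u_k,v\}$ with $k\notin\{i,j\}$ yields a symmetric difference of size $4$. Suppose $\alpha\ne\beta$, and without loss of generality $\alpha(u_1)\ne\beta(u_1)$. Applying $f$ to the adjacencies $\{u_1,u_2\}\sim\{u_1,v\}$ and $\{u_1,u_2\}\sim\{u_2,v\}$, and using that a $T$--$C$ adjacency forces exactly one shared (necessarily rim) vertex, I would deduce $\alpha(u_2)=\beta(u_1)$ and $\alpha(u_1)=\beta(u_2)$; feeding in $\{u_1,u_j\}\sim\{u_j,v\}$ for $j\ge3$ then forces $\alpha(u_j)=\beta(u_j)$. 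Writing $a=\beta(u_1)$ and $b=\beta(u_2)$, this says $\alpha\beta^{-1}$ is the transposition $(a\ b)$. But $\alpha\beta^{-1}\in\aut(C_n)=D_{2n}$, and no non-identity element of $D_{2n}$ is a single transposition once $n\ge5$ (its reflections are products of at least two $2$-cycles and its rotations are fixed-point-free of a single cycle length), a contradiction. Hence $\alpha=\beta$, which finishes the proof. I expect the incidence-chasing that pins $\alpha\beta^{-1}$ down to a transposition to be the only delicate step; the degree analysis and the two structural reductions are routine.
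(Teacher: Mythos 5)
There is a fundamental mismatch here: the statement you were asked about is the \emph{conjecture} for $3\leq k\leq n/2$, which the paper leaves open (it is stated only ``based on experimental results''), whereas your entire argument concerns $k=2$. What you have written is, essentially verbatim, the paper's own proof of Theorem~\ref{teo-2rueda} (the decomposition into $T$ and $C$, the degree separation $\{4,6\}$ versus $n+1$, the two cycle automorphisms $\alpha$ and $\beta$, and the contradiction that $\alpha\beta^{-1}$ would be a transposition in $D_{2n}$) --- but that theorem is not the statement in question, and reproducing it does not prove the conjecture.

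Moreover, the natural generalization of your strategy to $k\geq 3$ breaks at three concrete points. First, the analogous decomposition splits $V(F_k(W_{1,n}))$ into $k$-sets containing the hub $v$ (inducing a copy of $F_{k-1}(C_n)$) and rim-only $k$-sets (inducing $F_k(C_n)$), so your key input ``every automorphism of the rim part is induced by an element of $\aut(C_n)$'' would require $\aut(F_k(C_n))=\aut(C_n)$ for $k\geq 3$, which is itself only a conjecture in this paper; the proven case is $k=2$ alone. Second, your ``routine'' degree separation fails for general $k$: a rim-only $k$-set occupying $m$ arcs of the cycle has degree $k+2m$, while a hub-containing set whose $k-1$ rim vertices occupy $m'$ arcs has degree $(n-k+1)+2m'$, and these ranges overlap whenever $n$ is odd and $n\leq 4k-3$ (for example, in $F_3(W_{1,7})$ both parts contain vertices of degree $7$ and of degree $9$), so $f$ need not preserve the two parts setwise by degree counting. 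Third, your argument, if it worked, would yield $\aut(F_k(W_{1,n}))=\aut(W_{1,n})$ uniformly, contradicting the $k=n/2$ half of the statement, which asserts an extra factor of $S_2$; you never exhibit or account for that additional automorphism (note that the paper's complementation automorphism $f_c$ requires the order of the graph, here $n+1$, to equal $2k$, so even identifying the source of the conjectured $S_2$ at $k=n/2$ requires a new idea). In short: the statement remains open, and your proposal proves a different (already proven) theorem rather than closing any of these gaps.
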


\section{Proof of Theorem~\ref{auto-path}}\label{sec4}

In this section we use $P_n$ to denote the path graph with $V(P_n)=\{1, \dots, n\}$ and $E(P_n)=\{\{i, i+1\}\colon 1\leq i \leq n-1 \}$. It is well-known that $\aut(P_1)\simeq S_1$ and $\aut(P_n) \simeq S_2$, for $n \geq 2$. Explicitly, if $n\geq 2$, then $\aut(P_n)=\{id, \theta\}$, where 
\[
\theta=(1, \; n)(2 ,\; n-1) \dots (n/2, \; n/2+1),  \text{ when $n$ is even}
\] 
and 
\[
\theta=(1, \; n)(2, \; n-1) \dots (\lceil n/2 \rceil-1, \; \lceil n/2 \rceil+1)(\lceil n/2 \rceil),   \text{ when $n$ is odd}.
\]
(We are writing permutation $\theta$ in its cycle notation).

Our main result in this section is Theorem~\ref{auto-path} about the automorphism group of $F_k(P_n)$, for $n \neq 2k$. In Figure~\ref{fig6} and \ref{fig7} we show $F_2(P_6)$ and $F_3(P_7)$, respectively. 

\begin{figure}
\begin{center}
\includegraphics[width=.70\textwidth]{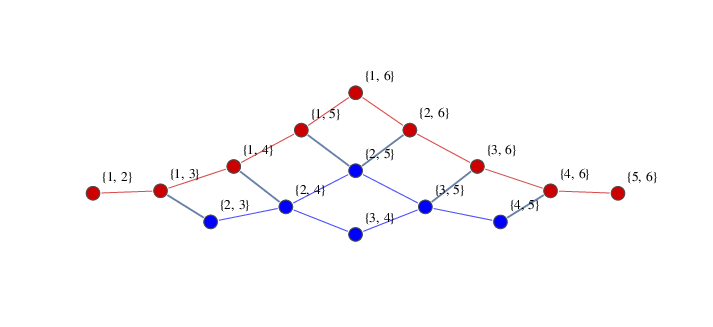}
\caption{The $2$-token graph of $P_6$. The red (resp. blue) subgraph is induced by  the set $L_1$ (resp. $L_2$) defined in the proof of Theorem~\ref{auto-path}.}
\label{fig6}
\end{center}
\end{figure}

First, we present some auxiliary results. Without loss of generality, we write the elements of a vertex  $\{a_1, \dots, a_k\} \in V(F_k(P_n))$ in ascending order, that is $a_1<\dots <a_k$. Using Observation~\ref{grado} we obtain the following facts about the vertices in $F_k(P_n)$.
 \begin{observation}\label{ob-p-2} Let $P_n$ be a path graph of order $n \geq 3$ and let  $2\leq k \leq n-2$ be an integer. 
 \begin{enumerate}  \item \label{ob-p-2-1}  Let $v=\{a_1, a_2, \dots, a_k\}$ be a vertex in $F_k(P_n)$. The degree of $v$ is even if and only if  $\{a_1, a_k\}=\{1, n\}$ or $\{a_1, a_k\}\cap \{1, n\}=\emptyset$.
\item \label{ob-p-2-2} The vertices of degree $2$ in $F_k(P_n)$ are either of the form $\{a, a+1, \dots, a+(k-1)\}$, for $2 \leq a \leq n-k$, or $\{1, \dots, m, n-(k-m-1), n-(k-m), \dots, n\}$, for $1\leq m\leq k-1$.
\item \label{ob-p-2-3} The graph $F_k(P_n)$ has exactly two vertices of degree one, say $\{1, \dots, k\}$ and $\{n-(k-1), \dots, n\}$.
\end{enumerate}
\end{observation}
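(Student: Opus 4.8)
The plan is to reduce every degree computation to counting sign changes in the indicator sequence of a vertex along the path. Given a vertex $A=\{a_1,\dots,a_k\}$, I would attach to it the binary string $s_1 s_2 \cdots s_n$ with $s_i=1$ if $i\in A$ and $s_i=0$ otherwise. By Observation~\ref{grado}, $d(A)$ equals the number of edges $\{i,i+1\}$ of $P_n$ having exactly one endpoint in $A$; in terms of the string this is exactly the number of indices $i\in\{1,\dots,n-1\}$ with $s_i\neq s_{i+1}$, i.e. the number of \emph{transitions} of the string. Thus the whole observation becomes a statement about binary strings of length $n$ with exactly $k$ ones, sorted by their number of transitions. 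Throughout I would use that $a_1=\min A$ and $a_k=\max A$, so that $1\in A\iff a_1=1$ and $n\in A\iff a_k=n$.

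For part (\ref{ob-p-2-1}) I would invoke the elementary fact that the parity of the number of transitions of a binary string equals the parity of $s_1+s_n$: working modulo $2$, the indicator $[s_i\neq s_{i+1}]$ is congruent to $s_{i+1}-s_i$, so the sum telescopes to $s_n-s_1$. Hence $d(A)$ is even if and only if $s_1=s_n$, that is, if and only if either $1,n\in A$ (equivalently $a_1=1$ and $a_k=n$, so $\{a_1,a_k\}=\{1,n\}$) or $1,n\notin A$ (equivalently $a_1>1$ and $a_k<n$, so $\{a_1,a_k\}\cap\{1,n\}=\emptyset$). This is exactly the claimed dichotomy.

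For parts (\ref{ob-p-2-3}) and (\ref{ob-p-2-2}) I would enumerate the strings of small transition number. A string has exactly one transition if and only if it is a single run of one symbol followed by a single run of the other; with exactly $k$ ones and $1\leq k\leq n-1$ the only options are $1^k 0^{n-k}$ and $0^{n-k} 1^k$, giving the two degree-one vertices $\{1,\dots,k\}$ and $\{n-k+1,\dots,n\}$ of part (\ref{ob-p-2-3}). A string has exactly two transitions if and only if it has exactly three maximal runs, hence is of the form $0^p 1^q 0^r$ or $1^p 0^q 1^r$ with $p,q,r\geq 1$. In the first case the ones occupy a single interior interval, so $q=k$ and $A=\{p+1,p+2,\dots,p+k\}$; the constraints $p\geq 1$ and $r=n-k-p\geq 1$ say precisely that the smallest element $p+1$ ranges over $\{2,\dots,n-k\}$, which is the first family of part (\ref{ob-p-2-2}). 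In the second case the ones split into an initial run of length $p$ and a terminal run of length $r$, so $p+r=k$ and $q=n-k\geq 1$; writing $m=p$ (so $1\leq m\leq k-1$) gives $A=\{1,\dots,m\}\cup\{n-(k-m)+1,\dots,n\}$, which is the second family.

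The individual steps are routine, so I do not expect a genuine obstacle; the only point requiring care is the bookkeeping in part (\ref{ob-p-2-2}), namely confirming that the two run-patterns $0^p1^q0^r$ and $1^p0^q1^r$ exhaust the degree-two vertices and that the positivity constraints on $p,q,r$ translate into exactly the stated ranges $2\leq a\leq n-k$ and $1\leq m\leq k-1$. I would also check that the hypothesis $2\leq k\leq n-2$ makes both degree-two families nonempty and disjoint from the two degree-one vertices, so that the classification is complete.
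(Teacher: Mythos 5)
Your proposal is correct, and it is essentially the paper's approach: the paper states this observation without written proof, deriving it directly from Observation~\ref{grado} (the degree of $A$ is the number of edges of $P_n$ between $A$ and its complement), which is exactly the count you formalize as the number of transitions of the indicator string. Your telescoping-parity argument for part (\ref{ob-p-2-1}) and the run-decomposition for parts (\ref{ob-p-2-2}) and (\ref{ob-p-2-3}) are a clean and complete way of carrying out the case analysis the authors leave implicit.
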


The formula for the graph distance  $d(u, v)$ between vertices $u$ and $v$ in $F_k(P_n)$ is given in the following result. 

\begin{lemma}
Let $u=\{u_1, \dots, u_k\}$ and $ v=\{v_1, \dots, v_k\}$ be vertices in $F_k(P_n)$, where $u_1 <  \dots < u_k$ and $v_1 < \dots <  v_k$. Then
\[
d( u,  v)=\sum_{i=1}^k|v_i-u_i|.
\]
\end{lemma}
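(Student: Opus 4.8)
The plan is to reinterpret the graph metric combinatorially and then prove the two inequalities $d(u,v) \le \sum_{i=1}^k |v_i - u_i|$ and $d(u,v) \ge \sum_{i=1}^k |v_i - u_i|$ separately. First I would observe that, by the definition of the token graph, an edge of $F_k(P_n)$ corresponds to moving a single ``token'' from a cell $j$ of $P_n$ to an adjacent empty cell $j \pm 1$. Thus a walk from $u$ to $v$ is a sequence of such single-step moves, and $d(u,v)$ is the minimum number of moves needed to turn the configuration $u$ into the configuration $v$.

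For the lower bound, the key structural fact I would establish is that the tokens cannot change their relative order: if at some moment the occupied cells are $a_1 < \cdots < a_k$, a legal move sends some $a_i$ to $a_i \pm 1$ and, because the target cell must be empty, the resulting set is still strictly increasing with the same rank carrying the same token. Consequently, tracking tokens by their rank, the token of rank $i$ starts at $u_i$ and ends at $v_i$. Since each move changes the position of exactly one token by exactly $1$, the token of rank $i$ performs at least $|v_i - u_i|$ moves, and summing over $i$ gives $d(u,v) \ge \sum_{i=1}^k |v_i - u_i|$.

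For the upper bound I would argue by induction on the quantity $D(u,v) := \sum_{i=1}^k |v_i - u_i|$, exhibiting a walk of exactly this length. If $D(u,v) = 0$ then $u=v$; otherwise I claim there is always a token that can be moved one step toward its target without being blocked, decreasing $D$ by $1$. Concretely, if some token must move right (that is, $u_i < v_i$), I would take the largest such index $i_0$ and check that the cell $u_{i_0}+1$ is empty: were it occupied, we would have $u_{i_0+1} = u_{i_0}+1$, and then $u_{i_0} < v_{i_0} < v_{i_0+1}$ would force $u_{i_0+1} \le v_{i_0} < v_{i_0+1}$, contradicting the maximality of $i_0$. A symmetric argument (reflecting the path, taking the smallest mismatched index) handles the case where every mismatched token must move left. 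The chosen move keeps the configuration sorted and valid, so the induction completes the construction and yields $d(u,v) \le D(u,v)$.

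The main obstacle is the upper bound, and specifically the unblocking step: one must guarantee that a token which needs to advance toward its destination can actually do so, rather than being stuck behind a neighboring token. The maximality (respectively minimality) choice above is exactly what makes the obstructing cell provably empty, and getting this selection right---together with the bookkeeping that the relative order of tokens is preserved throughout, which underlies the lower bound as well---is the crux of the argument. Combining the two inequalities then gives $d(u,v) = \sum_{i=1}^k |v_i - u_i|$.
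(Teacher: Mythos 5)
Your proof is correct, and it takes a genuinely different route from the paper. The paper identifies a vertex $\{a_1,\dots,a_k\}$ with the increasing tuple $(a_1,\dots,a_k)\in\zz^k$, notes that $F_k(P_n)$ is a subgraph of the grid graph on $\zz^k$, and invokes the known fact that path distance in the grid graph equals the $L_1$-distance; the formula is then read off from this embedding. Your argument is instead a self-contained token-sliding proof: the lower bound comes from the observation that moves preserve the relative order of tokens, so the rank-$i$ token must travel at least $|v_i-u_i|$ steps, and the upper bound comes from an explicit greedy construction (move the largest-indexed token that needs to go right, or symmetrically the smallest-indexed one that needs to go left), with the maximality/minimality choice guaranteeing the target cell is empty. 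What your approach buys is precisely the part the paper leaves implicit: being a subgraph of $\zz^k$ only yields $d(u,v)\ge\sum_i|v_i-u_i|$ directly, and one still has to check that an $L_1$-geodesic can be routed entirely through strictly increasing tuples in $\{1,\dots,n\}^k$ --- your unblocking step is exactly that verification. The paper's approach is shorter and situates the result in a standard metric-graph framework, while yours is more elementary and fully explicit. Both are valid.
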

\begin{proof}
The $L_1$-distance on $\zz^k$ is defined as 
\[
d_{L_1}( a,  b)=\sum_{i=1}^k|b_i-a_i|,
\]
for every $ a=\{a_1, \dots, a_k\},  b=\{b_1, \dots, b_k\} \in \zz^k$. The grid graph $\zz^k$ is constructed as follows: two points in $\zz^k$ are adjacent if their $L_1$-distance is equal to one.  It is well-known (see, e.g., \cite[p. 333]{deza}) that the $L_1$-distance is equal to the path distance on the grid graph $\zz^k$. The result follows by observing that $F_k(P_n)$ is a subgraph of the grid graph $\zz^k$, where vertex $\{a_{i_1}, \dots, a_{i_k}\}$ in $F_k(P_n)$ correspond to the unique vertex $(a_1, \dots, a_k)$ in $\zz^k$ such that $a_1 < \dots < a_k$ and $\{a_{i_1}, \dots, a_{i_k}\}=\{a_1, \dots, a_k\}$.
\end{proof}

The proof of previous lemma also follows by using the formula of distance in \cite{fisch}. The case for $F_2(P_n)$ was also proved in \cite{beaula}. In the proof of the following theorem, we use several times the following fact:  if $H$ is a graph, then
\[
d(a, b)=d\left(g(a), g(b)\right),
\] for every $g \in \aut(H)$ and every $a, b \in V(H)$.

\begin{proof}[Proof of Theorem~\ref{auto-path}]

 If $k=1$, then $F_1(P_n) \simeq P_n$ and the result is trivially true. The cases $n\leq 7$, $1\leq k \leq n/2$, were verified by computer and hence we suppose that $n \geq 8$.  Let $\Gamma=F_k(P_n)$.  
By Theorem~\ref{aut-sub}, it follows that $\aut(P_n) \leq \aut(\Gamma)$ and hence it is enough to prove that $|\aut(\Gamma)|\leq2$.

Let $x \in \Gamma$ be the vertex $\{1, \dots, k\}$. Note that $d(x)=1$ and hence $|O(x)|\leq 2$. In fact, $|O(x)|= 2$ because the non-identity automorphism, say $\phi$, in $\aut(P_n)$ induces a non-identity automorphism $g_\phi$ in $\aut(\Gamma)$  such that $g_\phi(\{1, \dots, k\})=\{n-(k-1), \dots, n\}$. Let $\Gamma_1=\Gamma\langle N(x) \rangle$.  Let $\varphi \colon {\rm Stab}(x) \to \aut( \Gamma_1)$ be the function defined by $\varphi(f)=f\restrict{N(x)}$. As $\aut(\Gamma_1)=\{ id\}$, we have that ${\rm Stab}(x)={\rm Ker}~\varphi$. We will prove that ${\rm Ker}~\varphi=\{ id\}$, which implies that $|{\rm Aut}(\Gamma)| \leq 2$. 

First we prove the case $k=2$. Suppose by induction that $F_2(P_m)=\aut(P_m)$, for every $5\leq m <n$. Let $L_1=\{\{a_1, a_2\} \in V(\Gamma) \colon a_1=1 \text{ or } a_2=n\}$ and $L_2=V(\Gamma) \setminus L_1$. Let $\Gamma_{L_1}=\Gamma \langle L_1 \rangle$ and let $\Gamma_{L_2}=\Gamma \langle L_2 \rangle$. Note that $\Gamma_{L_1}\simeq P_{2n-3}$ and, by Proposition~\ref{pborrado}, we have that $\Gamma_{L_2}\simeq F_2(P_{n-2})$ (see Figure~\ref{fig6} for an example). It is easy to see that, with the exception of $\{1, n\}$, all the vertices in $L_1$ have either degree $1$ or degree $3$. In fact $L_1$ has all the vertices of $\Gamma$ that have degree $1$ or $3$. Note that $\{1, n\}$ is the unique vertex in $\Gamma$ of degree $2$ in  with its two neighbors of  degree $3$. Consequently, if $f \in \aut(\Gamma)$, then $f(L_1)=L_1$ and hence $f|_{L_1} \in \aut(\Gamma_{L_1})$. Now, let $f \in {\rm Ker}~\varphi$. Since $\aut(L_1)\simeq \aut(P_{2n-3})$ and  $f(\{1, 2\})=\{1, 2\}$, we have that $f|_{L_1}=id$ which shows that $f(L_1) \subset  {\rm Fix}(f)$. 
Using this, the proof  that $f(L_2) \subset {\rm Fix}(f)$ follows immediately by induction because $\Gamma_{L_2}\simeq F_2(P_{n-2})$ and $n-2\geq 6$ (see Figure~\ref{fig6} for an example). 

Now we prove the case $3 \leq k <n/2$. The proof  is by induction on $n$. As bases cases, we have proved the result for $F_2(P_n)$, for every $n\geq 5$, and for $n=8$, we have verified every $k \in \{2, 3\}$ by computer. We suppose that the result is true for any graph $P_{n'}$ with $n'<n$. That is, $\aut(F_k'(P_{n'}))=\aut(P_{n'})$, for $2\leq k' <  n'/2<n/2$. 

Let $f \in {\rm Ker}~\varphi$. Since $f \in {\rm Stab}(x)$, we have that $f(\{1, \dots, k-1, k+1\})=(\{1, \dots, k-1, k+1\}$ and $f(\{n-(k-1), \dots, n-1, n\})=(\{n-(k-1), \dots, n-1, n\})$ (the grey vertices in the example in Figure~\ref{fig7}). 

We define the following subsets of $V(\Gamma)$
\begin{eqnarray*}
A&=&\{\{a_1, \dots, a_k\} \in V(\Gamma) \colon a_k=n\}\\
B&=&\{\{b_1, \dots, b_k\} \in V(\Gamma) \colon b_1=1\}\\
C&=&\{\{c_1, \dots, c_k\}  \in V(\Gamma) \colon c_1\neq 1, c_k\neq n\}
\end{eqnarray*} 
Note that $C=V(\Gamma) \setminus (A\cup B)$. Let $\Gamma_A=\Gamma\langle A \rangle$, $\Gamma_B=\Gamma\langle B \rangle$ and $\Gamma_C=\Gamma\langle C \rangle$. We will prove that if $f$ in ${\rm Ker}~\varphi$, then $f$ is the identity permutation by showing that $X \subset  {\rm Fix}(f)$, for every $X \in \{A, B, C\}$. We do this in four steps: first we prove that if $f \in {\rm Ker}~\varphi$, then $f(\{1, \dots, k-1, n\})=(\{1, \dots, k-1, n\}$ (Claim~\ref{clamita2}), second we work the case of $A$, third the case of $B$ and finally the case of $C$.   In Figure~\ref{fig7}, we use colors to illustrate the four steps.

\begin{figure}
\begin{center}
\includegraphics[width=.9\textwidth]{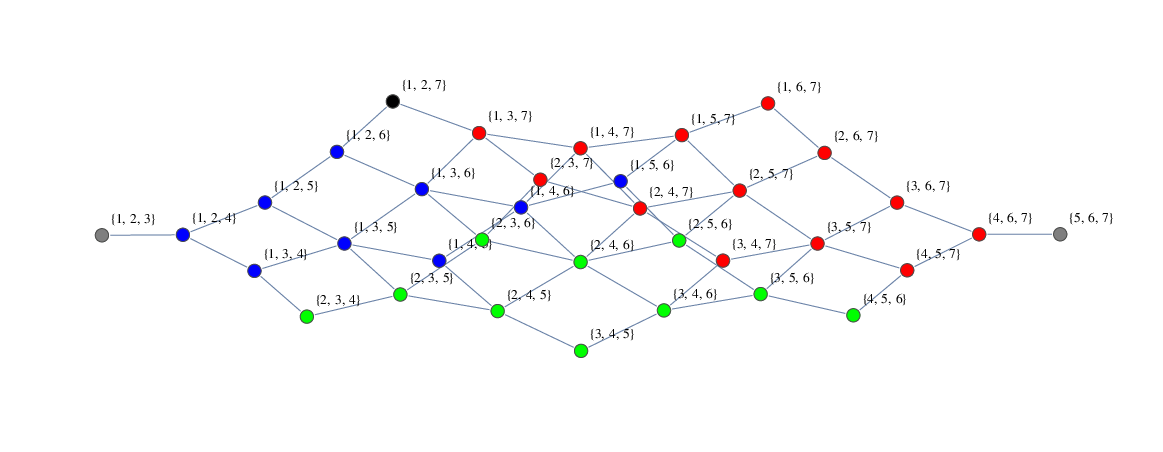}
\caption{The $3$-token graph of $P_7$. In the proof of Theorem~\ref{auto-path}, for $f \in {\rm Ker}~\varphi$, we prove that $f(v)=v$, for every $v \in V(\Gamma)$, in the following order: grey, black, red, blue and green vertices.}\label{fig7}
\end{center}
\end{figure}

We need the following:

\begin{claim}\label{claim1} Let $y$ be the vertex $\{1, \dots, k-1, n\}$.
\begin{enumerate}
\item If $3 \leq a \leq n-k-1$, then $\{a, a+1, \dots, a+(k-1)\} \not \in O(y)$,
\item If $2\leq m\leq k-2$, then $\{1, \dots, m, n-(k-m-1), n-(k-m), \dots, n\} \not \in O(y)$.
\end{enumerate}
\end{claim}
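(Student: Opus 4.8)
The claim asserts that the vertex $y = \{1, \dots, k-1, n\}$ has an orbit $O(y)$ under $\aut(\Gamma)$ that avoids two families of degree-$2$ vertices. Since orbits consist of vertices sharing the same degree, and graph automorphisms preserve distances, my plan is to distinguish $y$ from the listed candidates by a combination of (i) degree and (ii) the distance profile to the fixed endpoint vertices of $\Gamma$. By Observation~\ref{ob-p-2}(\ref{ob-p-2-1}), the degree of $y$ is determined by whether $\{a_1,a_k\}$ equals $\{1,n\}$ or is disjoint from $\{1,n\}$; for $y$ we have $a_1=1$ and $a_k=n$, so $y$ has even degree. The candidates in part (1), of the form $w=\{a,a+1,\dots,a+(k-1)\}$ with $3\le a\le n-k-1$, are interior consecutive blocks, hence also even degree (in fact degree $2$ by Observation~\ref{ob-p-2}(\ref{ob-p-2-2})). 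So degree alone will not separate them, and I expect the distance invariant to do the real work.

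\textbf{Main steps.} First I would record that $x=\{1,\dots,k\}$ is fixed by every $f\in\mathrm{Ker}\,\varphi$, and likewise its antipodal endpoint $x'=\{n-(k-1),\dots,n\}$ is fixed (these are the unique degree-$1$ vertices by Observation~\ref{ob-p-2}(\ref{ob-p-2-3}), and $f$ stabilizes $x$ so it stabilizes the pair, but being in the kernel it fixes each). Then, using the distance formula $d(u,v)=\sum_i |v_i-u_i|$ from the Lemma, I would compute $d(x,y)$ and $d(x',y)$ and compare them against $d(x,w)$ and $d(x',w)$ for each candidate $w$. For $y=\{1,\dots,k-1,n\}$ one gets $d(x,y)=n-k$ (only the last coordinate moves, from $k$ to $n$). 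For the consecutive block $w=\{a,\dots,a+(k-1)\}$ one gets $d(x,w)=\sum_{i=1}^k(a+i-1-i)=k(a-1)$. The point is to show that no choice of $a$ in the allowed range can simultaneously match both $d(x,\cdot)=n-k$ and $d(x',\cdot)$, the value at the opposite endpoint. Since an automorphism fixing $x$ and $x'$ must preserve both distances simultaneously, matching only one is not enough; I would argue that the pair $(d(x,w),d(x',w))$ can never equal $(d(x,y),d(x',y))$ for the specified ranges, which forces $w\notin O(y)$. Part (2), with candidates $\{1,\dots,m,n-(k-m-1),\dots,n\}$ for $2\le m\le k-2$, is handled identically: these split symmetrically near both endpoints, so their distance pair to $(x,x')$ is balanced, whereas $y$ sits all-but-one at the left endpoint, giving a lopsided distance pair that cannot coincide.

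\textbf{The main obstacle.} The delicate part is verifying the inequalities on the distance pairs cleanly across the full parameter ranges ($3\le a\le n-k-1$ in part (1), $2\le m\le k-2$ in part (2)), rather than for a single convenient case; this is where a careless sign or off-by-one in the $L_1$-sum could derail the argument. I expect the cleanest route is to exploit the left–right symmetry $\theta$: since $d(x,v)+d(x',v)$ is, up to the induced involution, controlled by how the coordinates of $v$ distribute between the two ends, I would show that $y$ is the unique (up to $\theta$) degree-$2$ vertex adjacent to $x$ in the block structure whose distance pair is $(n-k,\,\text{the complementary value})$, and that all other degree-$2$ vertices are strictly farther from one endpoint. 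In other words, the real content is an \emph{extremality} statement: among the degree-$2$ vertices, $y$ and its mirror image are distinguished by minimizing the distance to one of the two fixed endpoints while the listed candidates fail this, so they lie outside $O(y)$. Once this extremal characterization is set up, the exclusion of each candidate family reduces to a short arithmetic comparison using the Lemma.
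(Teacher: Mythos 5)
There is a genuine gap: the invariant you propose --- the (unordered) pair of distances from a candidate vertex to the two degree-one vertices $x=\{1,\dots,k\}$ and $x'=\{n-(k-1),\dots,n\}$ --- does not in fact separate $y=\{1,\dots,k-1,n\}$ from the consecutive blocks in part (1). Using the distance lemma, $d(x,y)=n-k$ and $d(x',y)=(k-1)(n-k)$, while for $w=\{a,\dots,a+(k-1)\}$ one gets $d(x,w)=k(a-1)$ and $d(x',w)=k(n-k-a+1)$. Setting $k(a-1)=n-k$ gives $a=n/k$, and one checks that this value \emph{automatically} makes the second coordinates agree as well. So whenever $k\mid n$ and $n/k$ lies in the range $3\le a\le n-k-1$ the two distance pairs coincide: concretely, in $F_3(P_{12})$ the vertices $y=\{1,2,12\}$ and $w=\{4,5,6\}$ both have degree $2$ and distance pair $(9,18)$ to $\{1,2,3\}$ and $\{10,11,12\}$. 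Your fallback "extremality" claim also fails there, since $\{3,4,5\}$ is a degree-$2$ vertex strictly closer to $x$ than $y$ is. (For part (2) your invariant does happen to work, since the pair $\{m(n-k),(k-m)(n-k)\}$ equals $\{n-k,(k-1)(n-k)\}$ only for $m\in\{1,k-1\}$, which the range $2\le m\le k-2$ excludes; but part (1) is where the argument breaks.)

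The paper avoids this entirely with a purely local invariant: an automorphism must carry the multiset of degrees of $N(y)$ to that of $N(f(y))$. Here $N(y)=\{y_1,y_2\}$ with $y_1=\{1,\dots,k-2,k,n\}$ of degree $4$ and $y_2=\{1,\dots,k-1,n-1\}$ of degree $3$ (this uses $k\ge 3$), whereas each candidate $w$ in part (1) and $z$ in part (2) has both of its neighbors of degree $4$ (this is exactly where the hypotheses $3\le a\le n-k-1$, $2\le m\le k-2$ and $k<n/2$ enter). Hence no automorphism whatsoever maps $y$ to these vertices, which also gives the full statement about $O(y)$ rather than only about automorphisms fixing $x$ and $x'$. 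If you want to salvage a distance-based proof you would need a finer invariant than the distances to the two endpoints; as written, the argument cannot be completed.
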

\begin{proof}
The elements in $O(y)$ should be vertices of degree $2$. We will use the fact that if $g \in \aut(\Gamma)$, then $N(g(v))=g(N(v))$, for every $v \in V(\Gamma)$. Note that $N(y)=\{y_1, y_2\}$, where $y_1=\{1, \dots, k-2, k, n\}$ and $y_2=\{1, \dots, k-1, n-1\}$. The condition $k\geq 3$ implies that $d(y_1)=4$ and $d(y_2)=3$.

{\it Proof of (1)}. Let $w=\{a, a+1, \dots, a+(k-1)\}$, with  $3 \leq a \leq n-k-1$, then $N(w)=\{w_1, w_2\}$, where $w_1=\{a-1, a+1, \dots, a+(k-1) \}$, $w_2=\{a, a+1, \dots, a+(k-2), a+k\}$, $d(w_1)=d(w_2)=4$. Therefore $f(y)\neq w$.

{\it Proof of (2)}. Let $z=\{1, \dots, m, n-(k-m-1), n-(k-m), \dots, n\}$, with $2\leq m\leq k-2$. Then $N(z)=\{z_1, z_2\}$, where $z_1=\{\{1, \dots, m-1, m+1, n-(k-m-1), n-(k-m), \dots, n\}$, $z_2=\{1, \dots, m, n-(k-m-1)-1, n-(k-m), \dots, n\}$ and $d(z_1)=d(z_2)=4$ (here we are using that $k <n/2$). Therefore $f(y)\neq z$.
\end{proof}

{\bf Step 1}. We prove the following fact.  
\begin{claim}\label{clamita2} Let $y$ be the vertex $\{1, \dots, k-1, n\}$. If $f \in {\rm Ker}~\varphi$, then $f(y)=y$.
\end{claim}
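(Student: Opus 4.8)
The plan is to pin down $f(y)$ by purely local, degree-based invariants, exploiting the fact that $y=\{1,\dots,k-1,n\}$ is a vertex of degree $2$ whose two neighbors have distinct degrees. Since $f\in{\rm Ker}~\varphi\subseteq{\rm Stab}(x)$ and the orbit of $x=\{1,\dots,k\}$ under $\aut(\Gamma)$ has size exactly $2$, every automorphism fixing $x$ must send $y$ to some vertex of degree $2$ lying in the orbit $O(y)$, because $f$ preserves degrees and $d(y)=2$ (using Observation~\ref{ob-p-2}(\ref{ob-p-2-1}), as $\{1,n\}=\{a_1,a_k\}$). By Observation~\ref{ob-p-2}(\ref{ob-p-2-2}), the degree-$2$ vertices are precisely those of the two listed shapes, and Claim~\ref{claim1} has already eliminated all such vertices except a short list: the candidates $\{a,a+1,\dots,a+(k-1)\}$ with $3\le a\le n-k-1$ and the ``split'' candidates with $2\le m\le k-2$ are ruled out by comparing the degree multiset of the neighborhood $N(y)=\{y_1,y_2\}$, where $d(y_1)=4$ and $d(y_2)=3$, against the neighborhoods of those candidates (all of whose neighbors have degree $4$).

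First I would record that, since $f$ is an automorphism, it induces a bijection $N(y)\to N(f(y))$ that preserves degrees; hence the degree multiset $\{3,4\}$ of $N(y)$ must equal the degree multiset of $N(f(y))$. This single constraint, combined with Claim~\ref{claim1}, forces $f(y)$ to lie among the remaining degree-$2$ vertices whose neighborhood also realizes the pair $\{3,4\}$. I expect these to be exactly the ``extreme'' vertices: $y=\{1,\dots,k-1,n\}$ itself and its mirror image $y'=\theta\text{-partner}$, namely $\{1,n-(k-2),\dots,n\}=\{1,n-k+2,\dots,n\}$ (the vertex with $m=1$ in the split family, whose two neighbors again have degrees $3$ and $4$). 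So after the elimination, the only two possibilities are $f(y)=y$ or $f(y)=y'$.

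To finish I would distinguish $y$ from $y'$ using the already-fixed data. Since $f\in{\rm Stab}(x)$ and $f\in{\rm Ker}~\varphi$, the vertex $x=\{1,\dots,k\}$ is fixed, and so is its unique neighbor $\{1,\dots,k-1,k+1\}$ (it is the only neighbor of $x$, as $d(x)=1$). The vertex $y=\{1,\dots,k-1,n\}$ is characterized among $\{y,y'\}$ by its graph distance to $x$: using the distance formula of the preceding Lemma, $d(x,y)=|n-k|=n-k$, whereas $d(x,y')=\sum_i|v_i-u_i|$ for $y'=\{1,n-k+2,\dots,n\}$, which evaluates to a strictly larger value for $n\ge 8$ and $k<n/2$. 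Because $f$ fixes $x$ and preserves distances, $d(x,f(y))=d(x,y)=n-k$, and since $d(x,y')\neq n-k$, the option $f(y)=y'$ is excluded. Therefore $f(y)=y$.

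The main obstacle I anticipate is the bookkeeping in the second step: one must verify that, apart from $y$ and $y'$, no other degree-$2$ vertex survives the neighborhood-degree test of Claim~\ref{claim1}, and in particular that $y'$ genuinely has a degree-$3$ and a degree-$4$ neighbor (so it is not eliminated) yet is separated from $y$ by the distance invariant. Getting the case $m=1$ right — checking that its neighborhood degree multiset is indeed $\{3,4\}$ and not $\{4,4\}$ — is the delicate point, and it is exactly why Claim~\ref{claim1} is stated only for $2\le m\le k-2$ rather than including $m=1$. Once that boundary case is handled, the distance computation cleanly selects $y$ over $y'$.
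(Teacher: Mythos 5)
Your overall strategy---restrict $f(y)$ to degree-$2$ vertices, prune with Claim~\ref{claim1}, then separate the survivors by their distance to already-fixed vertices---is essentially the paper's strategy, but your intermediate elimination step is wrong, and the proof as written has a genuine gap. After Claim~\ref{claim1} there are \emph{four} surviving candidates, not two. Observation~\ref{ob-p-2}(\ref{ob-p-2-2}) lists the consecutive blocks $\{a,a+1,\dots,a+(k-1)\}$ for $2\le a\le n-k$, while Claim~\ref{claim1}(1) only disposes of $3\le a\le n-k-1$; the boundary blocks $\{2,\dots,k+1\}$ and $\{n-k,\dots,n-1\}$ remain, alongside $y$ and $y'=\{1,n-(k-2),\dots,n\}$. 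Your neighborhood-degree-multiset test does not remove them: counting with Observation~\ref{grado}, $N(\{2,\dots,k+1\})$ consists of $\{1,3,\dots,k+1\}$ of degree $3$ and $\{2,\dots,k,k+2\}$ of degree $4$, i.e.\ exactly the multiset $\{3,4\}$ that you use to characterize $y$ and $y'$ (and symmetrically for $\{n-k,\dots,n-1\}$). So your claim that ``after the elimination, the only two possibilities are $f(y)=y$ or $f(y)=y'$'' is false, and these two candidates are never excluded in your write-up.

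The omission is not cosmetic, because these are exactly the cases carrying the real content of the claim. Ruling out $f(y)=\{2,\dots,k+1\}$ by comparing $d(x,\cdot)$ gives $n-k=k$, which is a contradiction only because of the theorem's hypothesis $n\neq 2k$ --- your argument never invokes this hypothesis where it is actually needed. Ruling out $f(y)=\{n-k,\dots,n-1\}$ gives $k(n-k-1)=n-k$, i.e.\ $k^2-nk+n=0$, whose impossibility requires the separate observation that the discriminant $n^2-4n$ is positive but not a perfect square. The distance comparison you do carry out (against $y'$, yielding $(k-1)(n-k)=n-k$ and hence $k=2$, a contradiction) is correct and coincides with the paper's Case~3; applying the same distance-to-$x$ test to the two candidates you overlooked is precisely how the paper completes the argument, so the fix is routine, but as submitted the proof is incomplete.
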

\begin{proof}
By Observation~\ref{ob-p-2}(\ref{ob-p-2-2}) and Claim~\ref{claim1} we have that $f(y)$ has only the following options: 
\begin{enumerate}
\item $ \{2, 3, \dots, k+1\},$ 
\item $\{n-k, n-(k-1), \dots, n-1\},$
\item $\{1, n-(k-2), \dots, n\}$
\item $\{1, \dots, k-1, n\}$,
\end{enumerate}
Now we use several times the fact that $d(a, b)=d\left(g(a), g(b)\right)$, for every $g \in \aut(\Gamma)$ and every $a, b \in V(\Gamma)$.

{\it Case 1.} Suppose that $f(\{1, \dots, k-1, n\})= \{2, 3, \dots, k+1\}$. Then
\begin{eqnarray*}
d(\{1, 2, \dots, k\}, \{1, \dots, k-1, n\})&=&d(\{1, 2, \dots, k\}, \{2, 3, \dots, k+1\})\\
n-k&=&k
\end{eqnarray*}
which implies that $n=2k$, a contradiction.

{\it Case 2.} Suppose that $f(\{1, \dots, k-1, n\})= \{n-k, n-(k-1), \dots, n-1\}$. Then
\begin{eqnarray*}
d(\{1, 2, \dots, k\}, \{1, \dots, k-1, n\})&=&d(\{1, 2, \dots, k\}, \{n-k, n-(k-1), \dots, n-1\})\\
n-k&=&\sum_{i=1}^{k}(n-i)-\sum_{i=1}^ki
\end{eqnarray*}
From which we obtain $k^2-nk+n=0$. The discriminant of equation $x^2-nx+n=0$ is $D=n^2-4n$. As $n\geq 7$, $D$ is positive and not a square number. Therefore $x^2-nx+n=0$ has not integer solutions, which is a contradiction.

{\it Case 3.} Suppose that $f(\{1, \dots, k-1, n\})= \{1, n-(k-2), \dots, n\}\}$. Then 
\begin{eqnarray*}
d(\{1, 2, \dots, k\}, \{1, \dots, k-1, n\})&=&d(\{1, 2, \dots, k\},  \{1, n-(k-2), \dots, n\})\\
n-k&=&\sum_{i=0}^{k-2}(n-i)-\sum_{i=2}^ki\\
n-k&=&(n-k)(k-1)\\
\end{eqnarray*}
As $n \neq k$, then $k=2$, a contradiction that $k\geq 3$. 

Therefore, the unique option is that $f(y)=y$ as desired. 
\end{proof}
In Figure~\ref{fig7} we show an example where vertex $\{1, \dots, k-1, n\}$ is colored black. 

 {\bf Step 2.} We will prove that $f(A) \subset  {\rm Fix}(f)$ (the red vertices in the example in Figure~\ref{fig7}). 
 
 First we prove that $f(A)=A$. Let $v=\{a_1, \dots, a_{k-1}, n\}$ be a vertex in $A$. Suppose that $f(v)=\{c_1, \dots, c_k\}$. Then
\[
d(\{1, 2, \dots, k\}, \{a_1, \dots, a_{k-1}, n\})=d(\{1, 2, \dots, k\}, \{c_1, \dots, c_k\}),
\]
 from which follows that 
\begin{equation}\label{eq-p-1}
n+a_1+\dots +a_{k-1}=c_1+c_2+\dots +c_k
\end{equation}

 Now, by Claim~\ref{clamita2}, $f(\{1, \dots, k-1, n\})=\{1, \dots, k-1, n\}$. Then 
\[
d(\{1, \dots, k-1, n\}, \{a_1, \dots, a_{k-1}, n\})=d(\{1, \dots, k-1, n\}, \{c_1, \dots, c_k\}),
\]
 from which follows that
\begin{equation}\label{eq-p-2}
a_1+\dots +a_{k-1}=n-c_k+(c_1+c_2+\dots +c_{k-1})
\end{equation}

 Combining equations (\ref{eq-p-1}) and (\ref{eq-p-2}) we obtain that $c_k=n$ and hence $f(v) \in A$. Then $f(A) \subseteq A$. In fact, $f(A) = A$ because $f$ is a bijection. Therefore $f|_A \in \aut(\Gamma_A)$ .  

 Notice that $\Gamma_A$ is isomorphic to $F_{k-1}(P_{n-1})$ (see Figure~\ref{fig8} for the case of $F_3(P_7)$), with an  isomorphism given by $\{a_1, \dots, a_{k-1}, n\} \mapsto \{a_1, \dots, a_{k-1}\}$. The inequality $3\leq k< n/2$, implies that $2\leq k-1< (n-1)/2$.  By the induction hypothesis, $\aut(F_{k-1}(P_{n-1}))=S_2$. The unique option is that $f|_A=id$ because $\{n-(k-1), \dots, n-1, n\}$ is the unique vertex in $A$ with degree $1$ simultaneously in $\Gamma_A$ and $F_k(P_n)$ (see Figure~\ref{fig8} for an example).
 
 \begin{figure}
\begin{center}
\includegraphics[width=.85\textwidth]{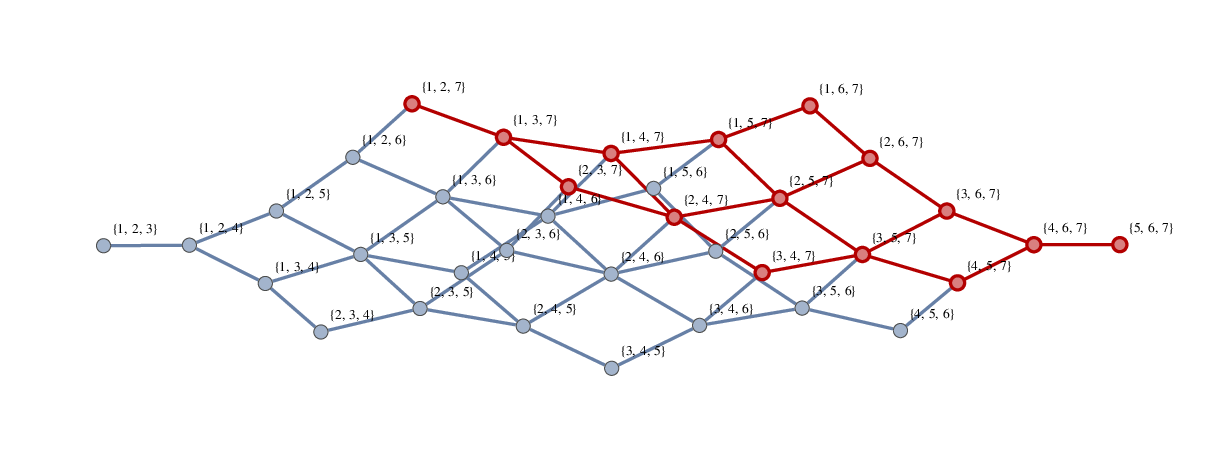}
\caption{The $3$-token graph of $P_7$. The red subgraph $H$ is isomorphic to $F_2(P_6)$. Note that the vertex $\{1, 2, 7\}$ has degree one in $H$ but degree two in $F_3(P_7)$.}
\label{fig8}
\end{center}
\end{figure}

 {\bf Step 3.} We will prove that $f(B) \subset  {\rm Fix}(f)$ (the blue vertices in the example in Figure~\ref{fig7}). 
 
 First, we prove that $f(B)=B$. Let $v=\{1, v_2, \dots, v_k\} \in B$. We have two cases: $v_k=n$ or $v_k \neq n$. If $v_k=n$, then $v \in A$ and hence $f(v)\in B$, because we have proved that $f(v)=v$, for every $v \in A$. Suppose now that $v_k\neq n$, that is $v \not \in A$. By Observation~\ref{ob-p-2}(\ref{ob-p-2-1}) it follows that $d(v)$ is odd and hence $d(f(v))$ should be odd. Also by Observation~\ref{ob-p-2}(\ref{ob-p-2-1}), $f(v)$ has only two options. The first one is $f(v)=\{1, \dots, d_k\}$, with $d_k \neq n$, in which case  $f(v)\in B$ as desired. The second option is $f(v)=\{a_1, \dots, n\}$, with $a_1\neq 1$. In this case $f(v) \in A$ and hence $f(v)=v \in A$, a contradiction. 

 Therefore, $f(B)\subseteq B$ and hence $f(B)=B$ because $f$ is a bijection. As $B\simeq F_{k-1}(P_{n-1})$, with an isomorphism given by $\{1, b_2, \dots, b_k\}\mapsto \{b_2, \dots, b_k\}$, then $\aut(\Gamma_B)\simeq S_2$ and hence $f|_B=id$ as in the case of $\Gamma_A$. Therefore $f(A\cup B)\subset {\rm Fix}(f)$. 

 {\bf Step 4.} We will prove that $f(C) \subset  {\rm Fix}(f)$ (the green vertices in the example in Figure~\ref{fig7}).
 
It is easy to see that $\Gamma \langle C \rangle \simeq F_k(P_{n-2})$. However, we can not apply the induction hypothesis on $C$ because it is possible that $k \geq (n-2)/2$ (for example, if $n=10$ and $k=4$). We solve this inconvenient by using the graph distance in $\Gamma$.   

Suppose that $f(\{c_1, \dots, c_k\})=\{d_1, \dots, d_k\}$.
\begin{eqnarray*}
d(\{1,c_2, \dots, c_k\}, \{c_1, c_2, \dots, c_k\})&=&d(\{1,c_2, \dots, c_k\}, \{d_1, \dots, d_k\})\\
c_1-1&=&d_1-1+\sum_{i=2}^k|d_i-c_i|
\end{eqnarray*}
and this implies that $c_1\geq d_1$. Now
\begin{eqnarray*}
d(\{1,d_2, \dots, d_k\}, \{c_1, c_2, \dots, c_k\})&=&d(\{1,d_2, \dots, d_k\}, \{d_1, \dots, d_k\})\\
c_1-1+\sum_{i=2}^k|d_i-c_i|
 &=&d_1-1 
 \end{eqnarray*}
 and this implies that $d_1\geq c_1$. Therefore, $d_1=c_1$.
 Now we will prove that $d_k=c_k$.
 \begin{eqnarray*}
d(\{c_1,c_2, \dots, c_{k-1},n\}, \{c_1, \dots, c_k\})&=&d(\{c_1,c_2, \dots, c_{k-1}, n\}, \{d_1, \dots, d_k\})\\
n-c_k&=&n-d_k+\sum_{i=1}^{k-1}|d_i-c_i|
\end{eqnarray*}
and this implies that $d_k\geq c_k$. Now
\begin{eqnarray*}
d(\{d_1,d_2, \dots, d_{k-1}, n\}, \{c_1, c_2, \dots, c_k\})&=&d(\{d_1,d_2, \dots, d_{k-1}, n\}, \{d_1, \dots, d_k\})\\
n-c_k+\sum_{i=1}^{k-1}|d_i-c_i|
 &=&n-d_k 
 \end{eqnarray*}
 and this implies that $d_k\leq c_k$. Therefore, $d_k=c_k$.
 
 Finally we prove that $c_i=d_i$, for every $2\leq i \leq k-1$. 
We have proved that if $c=\{c_1,c_2, \dots, c_{k-1},c_k\}$, then $f(c)=\{c_1, d_2, \dots, d_{k-1}, c_k\}$. Therefore
 \begin{eqnarray*}
d(\{1,d_2, \dots, d_{k-1}, n\}, \{c_1, c_2, \dots, c_{k-1}, c_k\})&=&d(\{1, d_2, \dots, d_{k-1}, n\}, \{c_1,d_2, \dots, d_{k-1},c_k\})\\
n-c_k+c_1-1+\sum_{i=2}^{k-1}|d_i-c_i|
 &=&n-c_k+c_1-1 
 \end{eqnarray*}
 From which we obtain that  $\sum_{i=2}^{k-1}|d_i-c_i|=0$. But as every $|d_i-c_i|$ is non negative, then $|d_i-c_i|=0$, for every $i \in \{2, \dots, k-1\}$ and hence $d_i=c_i$, for every $i \in \{2, \dots, k-1\}$. 
 So we have that $f(c)=c$, for every $c \in C$.
 
Therefore $f=id$ and the proof of the theorem is completed. 
\end{proof}
We finish this paper with the following: 
\begin{conjecture}
Let $n$ be an even integer. Then $\aut(F_{n/2}(P_n))\simeq S_2 \times S_2$.
\end{conjecture}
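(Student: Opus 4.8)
\emph{Proof plan.} Throughout write $\Gamma=F_{n/2}(P_n)$ and $k=n/2$, let $\theta$ be the non-identity automorphism of $P_n$, and let $f_\theta\in\aut(\Gamma)$ be the induced automorphism from Theorem~\ref{aut-sub}. I would first record the lower bound. By Theorem~\ref{aut-sub} we have $f_\theta\in\aut(\Gamma)$, and by the complementation theorem proved above the map $f_c(A)=V(P_n)\setminus A$ is a fixed-point-free automorphism of $\Gamma$ that is not induced by any automorphism of $P_n$. Both $f_\theta$ and $f_c$ are involutions, and they commute, since $f_c(f_\theta(A))=\theta(A)^c=\theta(A^c)=f_\theta(f_c(A))$ as $\theta$ is a bijection of $V(P_n)$. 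Moreover $f_\theta\neq f_c$ (a direct check on a single vertex), $f_\theta\neq\mathrm{id}$ (the correspondence $\theta\mapsto f_\theta$ of Theorem~\ref{aut-sub} is injective), and $f_c\neq\mathrm{id}$. Hence $\{\mathrm{id},f_\theta,f_c,f_\theta f_c\}$ is a subgroup of $\aut(\Gamma)$ isomorphic to $S_2\times S_2$, so $|\aut(\Gamma)|\geq4$, and it remains to prove $|\aut(\Gamma)|\leq4$.

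For the upper bound I would use the orbit--stabilizer theorem with the vertex $x=\{1,\dots,k\}$. By Observation~\ref{ob-p-2}(\ref{ob-p-2-3}) the graph $\Gamma$ has exactly two vertices of degree $1$, namely $x$ and $\bar x=\{k+1,\dots,n\}$, so $|O(x)|\leq2$; since $f_\theta(x)=\bar x$, in fact $|O(x)|=2$. Thus it suffices to show $|\mathrm{Stab}(x)|\leq2$. Observe that the involution $\sigma:=f_\theta f_c$ fixes $x$, because $f_c(x)=\bar x$ and $f_\theta(\bar x)=x$; hence $\{\mathrm{id},\sigma\}\subseteq\mathrm{Stab}(x)$, and the real task is the reverse inclusion.

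The heart of the argument is to show $\mathrm{Stab}(x)=\{\mathrm{id},\sigma\}$, and here I would follow the proof of Theorem~\ref{auto-path} almost verbatim, isolating the single place where the hypothesis $n\neq2k$ was used. Let $f\in\mathrm{Stab}(x)$ and put $y=\{1,\dots,k-1,n\}$, a vertex of degree $2$ with $d(x,y)=k$ (by the distance formula). Since $f$ fixes $x$ it preserves distances from $x$, so $f(y)$ must be a degree-$2$ vertex at distance $k$ from $x$. Enumerating the degree-$2$ vertices via Observation~\ref{ob-p-2}(\ref{ob-p-2-2}) and imposing this distance constraint, that is, re-running Cases $1$--$3$ of Claim~\ref{clamita2}, now leaves \emph{two} admissible images instead of one: the vertex $y$ itself and $\{2,\dots,k+1\}=\sigma(y)$, the latter being precisely the candidate that Case $1$ of Claim~\ref{clamita2} discards when $n\neq2k$. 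If $f(y)=\sigma(y)$, I would replace $f$ by $\sigma f\in\mathrm{Stab}(x)$, which satisfies $(\sigma f)(y)=\sigma(\sigma(y))=y$; so it is enough to treat the case $f(y)=y$. In that case Steps $2$--$4$ of the proof of Theorem~\ref{auto-path} apply with no change, since they rely only on $f(x)=x$, on $f(y)=y$, and on the distance formula, while the one inductive input they need, $\aut(F_{k-1}(P_{n-1}))=S_2$, is supplied by Theorem~\ref{auto-path} itself because $n-1\neq2(k-1)$. This forces $f=\mathrm{id}$, hence $f=\sigma$ in the remaining case, giving $|\mathrm{Stab}(x)|=2$. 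Combining with $|O(x)|=2$ yields $|\aut(\Gamma)|=4$, and together with the lower bound we conclude $\aut(\Gamma)\simeq S_2\times S_2$.

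The main obstacle is the bookkeeping in this last step: one must verify that the \emph{only} new admissible image for $f(y)$ created by dropping the hypothesis $n\neq2k$ is exactly $\sigma(y)$, so that composing with $\sigma$ cleanly reduces to the identity case, and that the degree computations underlying the analog of Claim~\ref{claim1} still isolate just these two candidates when $k=n/2$ (where the inequality $k<n/2$ used there no longer holds). The base case $k=2$, i.e.\ $n=4$, in which the inductive machinery has no smaller instance to invoke, I would settle by direct computation, exactly as the small cases are handled in the proof of Theorem~\ref{auto-path}.
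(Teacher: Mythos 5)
The paper offers no proof of this statement---it is left as a conjecture, supported only by computer experiments---so there is nothing of the authors' to compare your argument against; what follows is an assessment of your plan on its own terms. The strategy is sound and, as far as I can check, the verifications you defer do go through. The lower bound $\langle f_\theta, f_c\rangle\simeq S_2\times S_2$ is correct, but choose the witnessing vertex carefully: $f_\theta$ and $f_c$ agree on many vertices (e.g.\ on $\{1,\dots,k\}$ they both give $\{k+1,\dots,n\}$), so check them on $y=\{1,\dots,k-1,n\}$, where $1\in f_\theta(y)$ but $1\notin f_c(y)$. For the upper bound, the bookkeeping you flag as the main obstacle works out for $k\geq 3$: the degree counts in Claim~\ref{claim1} only use $k\leq n-2$, not $k<n/2$, so that claim survives at $k=n/2$; and re-running Cases 2 and 3 of Claim~\ref{clamita2} with $n=2k$ turns the two distance equations into $k(k-1)=k$ and $(k-1)k=k$, which force $k=2$, so for $k\geq 3$ the only surviving candidates for $f(y)$ are indeed $y$ and $\{2,\dots,k+1\}=\sigma(y)$. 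Composing with $\sigma$ then reduces to $f(y)=y$, and Steps 2--4 of the proof of Theorem~\ref{auto-path} apply verbatim, the needed input $\aut(F_{k-1}(P_{n-1}))=S_2$ being available unconditionally from Theorem~\ref{auto-path} because $n-1$ is odd. The excluded case $k=2$ is the finite check $n=4$: $F_2(P_4)$ is a $4$-cycle with two pendant vertices attached at antipodal corners, whose automorphism group is $S_2\times S_2$. What you have is a plan rather than a proof---the computations above still need to be written out, and the small case $n=6$ (where the index ranges in Claim~\ref{claim1} are empty but the four candidates and the distance argument still function) deserves an explicit word---but I see no gap in the strategy.
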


\section*{Acknowledgments}

The authors would like to thank the reviewer for her/his useful corrections.

\end{document}